\def\theequation{\thesection.\@arabic \c@equation}
\def\theenumi{\@roman\c@enumi}
\theoremstyle{definition}
\newtheorem{thm}{Theorem}[section]
\newtheorem*{thm*}{Theorem}
\newtheorem*{lem*}{Lemma}
\newtheorem{cor}[thm]{Corollary}
\newtheorem{mydef}[thm]{Definition}
\newtheorem{lem}[thm]{Lemma}
\newtheorem{prop}[thm]{Proposition}
\newtheorem{example}[thm]{Example}
\newtheorem{remark}[thm]{Remark}
\DeclareMathOperator{\Spec}{Spec}
\DeclareMathOperator{\height}{ht}
\DeclareMathOperator{\coht}{coht}
\DeclareMathOperator{\vp}{\varphi}
\newcommand{\mc}{\mathcal}
\newcommand{\ua}{\uparrow}
\newcommand{\da}{\downarrow}
\title{Chain Conditions and Optimal Elements in Generalized Union-Closed Families of Sets}
\author{Cory H. Colbert}
\address{\textsuperscript{1}Department of Mathematics, Washington and Lee University, Lexington, VA, USA}
\email{ccolbert@wlu.edu}
\keywords{posets, combinatorics, union-closed families, descending chain condition, ascending chain condition}
\begin{document}

\begin{abstract}
The union-closed sets conjecture (sometimes referred to as Frankl's conjecture) states that every finite, nontrivial union-closed family of sets has an element that is in at least half of its members. Although the conjecture is known to be false in the infinite setting, we show that many interesting results can still be recovered by imposing suitable chain conditions and considering carefully chosen elements called optimal elements. We use these elements to show that the union-closed conjecture holds for both finite and infinite union-closed families such that the cardinality of any chain of sets is at most three. We also show that the conjecture holds for all nontrivial topological spaces satisfying the descending chain condition on its open sets. Notably, none of those arguments depend on the cardinality of the underlying family or its universe. Finally, we provide an interesting class of families that satisfy the conclusion of the conjecture but are not necessarily union-closed.
\end{abstract}

\maketitle

\section{Introduction}
A family of sets $\mc F$ is \textit{union-closed} if for all $A, B \in \mc F,$ we have $A \cup B \in \mc F.$ The union-closed sets conjecture (sometimes referred to as Frankl's conjecture, in honor of P. Frankl) says that if $\mc F \ne \{\emptyset\}$ is a finite nonempty union-closed family of sets, then there exists an element that is in at least half of the sets in $\mc F.$ Such an element in a union-closed family is called an \textit{abundant element.} Despite over four decades of research, the conjecture has so far resisted proof and remains unresolved. 

Much progress has been made, however. Recall that if $\mc F$ is a family of sets, then the universe of $\mc F$ is defined as $U_{\mc F} := \cup_{F \in \mc F} F.$ In \cite{BosnjakMarkovic}, Bo\v{s}njak and Markovi\'{c} show that the conjecture holds if $\mc F$ is union-closed and $|U_{\mc F}| \le 11.$ In particular, any counterexample must have $|U_{\mc F}| \ge 12.$ Studying potential counterexamples further, Roberts and Simpson proved that if $q = |U_{\mc F}|$ is minimal among all union-closed counterexamples $\mc F$ to the conjecture, then $|\mc F| \ge 4q - 1.$ Consequently, any counterexample to the conjecture must have $|\mc F| \ge 47$ (\cite{RobertsSimpson}, Corollary 5). In \cite{BallaBollobasEccles}, Balla, Bollob\'{a}s and Eccles show that if $|\mc F| \ge \frac{2}{3} \cdot 2^{|U_{\mc F}|},$ then $\mc F$ has an abundant element. In 2022, Gilmer \cite{Gilmer} made a stunning breakthrough by showing that if $\mc F$ is union-closed, then there exists an element that is in at least 1\% of the members of $\mc F.$ Gilmer's result, which uses ideas from information theory and Shannon entropy, was the first result to show that there exists an element that is in a constant proportion $c$ of the members of a union-closed family. Gilmer's work resulted in significant research activity to improve the constant proportion $c$. The bound was initially improved to $c = \frac{3 - \sqrt{5}}{2} \approx 0.38197$ by Alweiss, Huang, and Sellke \cite{AlweisHuangSellke}; Chase and Lovett \cite{ChaseLovett}; and Sawin \cite{Sawin}. In (\cite{Sawin}, Section 2), Sawin showed that the original bound was not sharp, and both Cambie \cite{Cambie} and Yu \cite{Yu} improved the bound to $c \approx 0.38234$. Liu \cite{Liu} further improved the bound to $c \approx 0.38271.$\footnote{The author wishes to thank S. Cambie for providing helpful comments regarding the status of improved values of $c.$} In \cite{CambieExpo}, Cambie provides an excellent survey of Gilmer's method and entropic techniques. Another excellent exposition by Bruhn and Schaudt in \cite{BruhnSchaudt} summarizes numerous other strategies, approaches, and results toward understanding the conjecture.

Although the conjecture is typically stated in the context of finite families, it is natural to wonder what happens in the infinite case. In this setting, if $\mc F$ is a family of sets, then an element $x \in U_{\mc F}$ will be abundant if there exists an injective set map from the collection sets that do not contain $x$ into the collection of sets that do. In the most general setting the conjecture does indeed fail, with a classic counterexample being $\mc F = \{\mathbb N\setminus\{1, \ldots, i\}: i \in \mathbb N\}\cup \{\mathbb N\}$ (see \cite{Poonen}, p.2). In this example, the reader will notice that every positive integer only belongs to a finite number of sets in $\mc F,$ so no positive integer can be abundant. Another interesting observation can be made concerning $\mc F$ as well. Recall that if $\mc S$ is a collection of sets, ordered with respect to inclusion, then $\mc S$ has the \textit{descending chain condition} (see Definition \ref{posetDefinitions}) if every descending chain $A_1 \supseteq A_2 \supseteq A_3 \ldots$ of sets in $\mc S$ terminates. That is, there exists $n \in \mathbb N$ such that $A_n = A_m$ for all $m \ge n.$ The reader will notice that the family $(\mc F, \subseteq)$ defined above fails the descending chain condition: $\mathbb N \supsetneq \mathbb N\setminus\{1\} \supsetneq \mathbb N\setminus\{1, 2\} \supsetneq \ldots,$ thus leaving open the possibility for investigation into the case where one imposes chain conditions. It is a central focus of this paper to study how the two basic chain conditions -- the descending chain condition and its dual ascending chain condition -- affect union-closed families, and what can be said about such families in the context of the union-closed conjecture.

In this paper, we study general union-closed families with a particular focus on the partial order $(\mc F, \subseteq).$ If $x \in U_{\mc F},$ we define $\mc F_x = \{A \in \mc F: x \in A\},$ $\mathscr N(\mc F) = \{\mc F_x: x \in U_{\mc F}\},$ and we say $x$ is \textit{optimal} in $\mc F$ is $\mc F_x$ is maximal in $(\mathscr N(\mc F), \subseteq).$ Optimal elements are worth studying because they could provide promising places to look for abundant elements in many casual circumstances. As seen in the above example, union-closed families need not have optimal elements. However, as our first result shows, if $(\mc F, \subseteq)$ satisfies the descending chain condition and is nontrivial, then optimal elements always exist: 

\begin{lem*}
If $(\mc F, \subseteq)$ satisfies DCC, then $(\mathscr N(\mc F), \subseteq)$ satisfies ACC. Consequently, if $(\mc F, \subseteq)$ satisfies DCC and $\mc F_a \in \mathscr N(\mc F),$ then there exists an optimal $b \in U_{\mc F}$ such that $\mc F_a \subseteq \mc F_b.$
\end{lem*}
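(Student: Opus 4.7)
I would prove the first assertion (DCC on $\mc F$ implies ACC on $\mathscr N(\mc F)$) by contrapositive: assume there is a strict ascending chain $\mc F_{x_1} \subsetneq \mc F_{x_2} \subsetneq \cdots$ in $\mathscr N(\mc F)$, and build from it an infinite strictly descending chain in $\mc F$, contradicting DCC. The first step is to unpack what the ordering means: $\mc F_{x_i} \subseteq \mc F_{x_j}$ for $i \le j$ says that any $A \in \mc F$ containing $x_i$ must also contain $x_j$. Consequently, every $A \in \mc F_{x_{i+1}} \setminus \mc F_{x_i}$ misses all of $x_1, \ldots, x_i$ while containing all of $x_{i+1}, x_{i+2}, \ldots$.

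Using DCC, for each $i$ I would select a minimal $A_i \in \mc F_{x_{i+1}} \setminus \mc F_{x_i}$. A short observation shows $A_i$ is in fact minimal in $\mc F_{x_{i+1}}$ itself: any $B \subsetneq A_i$ with $B \in \mc F_{x_{i+1}}$ would satisfy $x_i \notin B$ (since $B \subseteq A_i$ and $x_i \notin A_i$), placing $B$ in $\mc F_{x_{i+1}} \setminus \mc F_{x_i}$ strictly below $A_i$ and contradicting minimality of $A_i$. The hard part is then to weave these $A_i$'s into a genuine strictly descending chain in $\mc F$, since $A_i$ and $A_{i+1}$ are not automatically comparable ($A_i$ contains $x_{i+1}$, which $A_{i+1}$ does not, so $A_i \not\subseteq A_{i+1}$). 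My plan is a recursive construction: given $C_n \in \mc F$ with trace $\{x_{n+1}, x_{n+2}, \ldots\}$ on the chain indices, invoke union-closedness of $\mc F$ to form unions of $A_{n+1}$ with portions of $C_n$, and then apply DCC to an appropriate subfamily of $\mc F$ contained in $C_n$ to extract $C_{n+1} \subsetneq C_n$ whose trace has shifted to $\{x_{n+2}, x_{n+3}, \ldots\}$. The most delicate piece is verifying non-emptiness of the candidate set at each recursive step so that the descent does not stall; I expect this to require a combined use of union-closure and DCC, possibly interpreted through the strictly descending chain $I_{x_1} \supsetneq I_{x_2} \supsetneq \cdots$ of intersections $I_{x_i} = \bigcap \mc F_{x_i}$ in $2^{U_{\mc F}}$, approximated from within $\mc F$ via minimal-element arguments.

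For the second part, the ACC established in the first part supports the standard argument: given $\mc F_a \in \mathscr N(\mc F)$, consider $P = \{\mc F_c \in \mathscr N(\mc F) : \mc F_a \subseteq \mc F_c\}$, which inherits ACC. Extending $\mc F_a$ upward in $P$ must terminate at some $\mc F_b$, and this $\mc F_b$ is then maximal in $\mathscr N(\mc F)$ as well, since any $\mc F_c \supseteq \mc F_b$ would lie in $P$ and hence coincide with $\mc F_b$ by maximality there. Thus $b$ is optimal with $\mc F_a \subseteq \mc F_b$, as claimed.
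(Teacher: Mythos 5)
Your setup for the first assertion is on track---the contrapositive, the observation that any $A \in \mc F_{x_{i+1}} \setminus \mc F_{x_i}$ contains $x_j$ for all $j \ge i+1$ and omits $x_j$ for all $j \le i$, and the selection of witnesses---but the proof stops exactly where the content is. The step you defer (``the most delicate piece is verifying non-emptiness of the candidate set at each recursive step'') is the whole theorem, and the recursive shrinking scheme you sketch cannot be completed as described, because the statement is genuinely false for families closed only under \emph{finite} unions. Take $A_i = \{y_i\} \cup \{x_i, x_{i+1}, x_{i+2}, \ldots\}$ for $i \in \mathbb N$ (all $x_j, y_i$ distinct) and let $\mc F$ be the closure of $\{A_i : i \in \mathbb N\}$ under finite unions. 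Every member of $\mc F$ has the form $\{y_i : i \in S\} \cup \{x_j : j \ge \min S\}$ for a finite nonempty $S \subseteq \mathbb N$, and containment of members corresponds exactly to containment of the finite sets $S$; hence $(\mc F, \subseteq)$ satisfies DCC. Yet $\mc F_{x_1} \subsetneq \mc F_{x_2} \subsetneq \cdots$ is a non-terminating ascending chain in $(\mathscr N(\mc F), \subseteq)$, witnessed by $A_{m+1} \in \mc F_{x_{m+1}} \setminus \mc F_{x_m}$. So no argument invoking only pairwise unions together with DCC can produce the descending chain you are after.

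The hypothesis actually used in the paper (Lemma \ref{optimalsExist}; it is suppressed in the informal statement quoted above) is that $\mc F$ is \emph{countably} union-closed, and with it the construction is direct rather than recursive: choose $X_i \in \mc F_{x_i} \setminus \mc F_{x_{i-1}}$ as you did (no minimality needed), and set $E_j := \bigcup_{i \ge j} X_i$. Countable union-closure puts each $E_j$ in $\mc F$, the tails are nested by construction, and since $x_j \in X_j \subseteq E_j$ while $x_j \notin X_i$ for every $i > j$, we get $x_j \in E_j \setminus E_{j+1}$, so $E_1 \supsetneq E_2 \supsetneq \cdots$ violates DCC. Your argument for the second assertion---taking a maximal element of $\{\mc F_c : \mc F_a \subseteq \mc F_c\}$ and checking it is maximal in all of $\mathscr N(\mc F)$---is correct and is essentially what the paper does.
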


In section 3, we show numerous applications of optimal elements. First, we turn our attention to generalized union-closed families $\mc F$ such that the length of the longest chain of sets in $(\mc F, \subseteq)$ is two (we define the length of a nonempty finite chain $C$ to be $|C| - 1$). Specifically, we prove the following:

\begin{thm*}
Every nontrivial union-closed family of dimension at most two has an abundant element.
\end{thm*}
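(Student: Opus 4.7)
The plan is to combine the existence of optimal elements with a rigid structural analysis of dimension-two families. Since the longest chain in $(\mc F, \subseteq)$ has at most three elements, DCC is automatic and the Lemma guarantees optimal elements. I would first show that $\mc F$ has a unique maximum $M$ (the union of two incomparable maximal elements would strictly exceed both), then set $L_1$ to be the maximal elements of $\mc F \setminus \{M\}$ and $L_0 := \mc F \setminus (\{M\} \cup L_1)$. Union-closure forces $B \cup B' = M$ for distinct $B, B' \in L_1$, so the complements $C_B := M \setminus B$ are pairwise disjoint; a second application yields the following dichotomy: for each $A \in L_0$ and $B \in L_1$, either $A \subseteq B$ or $A \supseteq C_B$, and for distinct $A_1, A_2 \in L_0$ the union $A_1 \cup A_2$ lies in $L_1 \cup \{M\}$.

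Setting $R := \bigcap_{B \in L_1} B$, I would split into two cases. If $R \neq \emptyset$, take any $b \in R$. Then $b$ lies in $M$ and in every $B \in L_1$, and since $A_1 \cup A_2 \in L_1 \cup \{M\}$ contains $R$ for distinct $A_1, A_2 \in L_0$, the sets $R \setminus A_1$ and $R \setminus A_2$ are disjoint subsets of $R$; hence $b$ fails to lie in at most one member of $L_0$. Thus $|\mc F \setminus \mc F_b| \le 1$ and $b$ is abundant.

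If $R = \emptyset$, I would view $L_0$ as a hypergraph on $L_1$ via the parent map $P(A) := \{B \in L_1 : A \subseteq B\}$. A short structural argument shows that if $A \in L_0 \setminus \{\emptyset\}$ then $|P(A)| \ge 2$ (a unique parent $B_0$ combined with the dichotomy would force $A \supseteq M \setminus C_{B_0} = B_0$, so $A = B_0 \in L_1$, contradicting $A \in L_0$), and $|P(A_1) \cap P(A_2)| \le 1$ for distinct $A_1, A_2 \in L_0$ (since $P(A_1 \cup A_2)$ has at most one element). So $P$ is a linear hypergraph. Choosing $B^* \in L_1$ that maximizes $d(B^*) := |\{A \in L_0 : C_{B^*} \subseteq A\}|$ and taking $b \in C_{B^*}$, one computes $|\mc F_b| = |L_1| + d(B^*)$; the pair-count $\sum_A \binom{|P(A)|}{2} \le \binom{|L_1|}{2}$, together with $\binom{k}{2} \ge k-1$ for $k \ge 2$, bounds $\sum_A |P(A)|$ from above and forces $d(B^*)$ to be sufficiently large. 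The corner cases $L_0 \in \{\emptyset, \{\emptyset\}\}$, $|L_1| \le 1$, and infinite $L_1$ (where the same pair-count already gives $|L_0| \le |L_1|$) are handled directly.

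The main obstacle is the quantitative step in the $R = \emptyset$ case: converting the linear-hypergraph pair bound into a sharp lower bound on $\max_B d(B)$. After running the algebra, the inequality required for abundance reduces to $|L_0|(|L_1| - 2) \ge 0$, which is automatic once $|L_1| \ge 2$; but reaching this clean form depends crucially on the structural dichotomy that every $A \in L_0 \setminus \{\emptyset\}$ has at least two parents.
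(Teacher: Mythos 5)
Your argument is essentially correct, but it takes a genuinely different route from the paper's. You decompose the dimension-two family into three levels $\{M\}$, $L_1$, $L_0$, use union-closure to get disjointness of the complements $C_B = M \setminus B$ and the dichotomy ``$A \subseteq B$ or $A \supseteq C_B$,'' and then finish either via $R = \bigcap_{B \in L_1} B$ when it is nonempty, or via a linear-hypergraph double count on the parent map when $R = \emptyset$. The paper instead fixes a single \emph{optimal} element $x$ (one with $\mc F_x$ maximal in $(\mathscr N(\mc F), \subseteq)$, which exists by Lemma \ref{optimalsExist} because finite dimension gives DCC), reduces to the separating case, uses Lemma \ref{Ix} to get $\bigcap_{F \in \mc F_x} F = \{x\}$, and deduces that every $A \in \mc F_x^c$ admits an $x$-cover $A \subset_c B$ with $B \in \mc F_x$; Lemma \ref{coverLemma} then shows distinct sets receive distinct covers, giving the injection. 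The paper's proof is shorter, entirely cardinality-free (no separate treatment of infinite $L_1$ or of the finite counting), and proves the stronger assertion that \emph{every} optimal element is abundant. Your approach buys some extra finite structure --- the bound $|L_0| \le \binom{|L_1|}{2}$ from the linear-hypergraph condition, and an explicitly located abundant element in a complement $C_{B^*}$ of maximal degree --- at the cost of the counting machinery and the corner cases ($|L_1| \le 1$, $\emptyset \in L_0$, infinite $L_1$) that you correctly flag but must still write out. One point to make explicit in a final write-up: the claim that $A_1 \cup A_2 \in L_1 \cup \{M\}$ for distinct $A_1, A_2 \in L_0$ requires first observing that $L_0$ is an antichain (a comparable pair in $L_0$ would sit strictly below some $B \in L_1$ and produce a chain of length three), and this same observation is what forces $L_0 = \{\emptyset\}$ whenever $\emptyset \in L_0$; both facts are used but left implicit in your sketch.
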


This result extends a result of Tian \cite{Tian} to the infinite case (height-three posets correspond to dimension-two posets herein), and its proof shows that every optimal element in such a family is abundant. Although optimal elements need not be abundant in general (see Example \ref{counterExHigherDim}), such examples only exist in dimension three or higher. Moreover, even in some more complicated examples in dimension three, such as Example \ref{keyExample}, optimal elements can still be abundant. As a final application of optimal elements, we show that they can be used to prove that certain topological spaces have abundant elements. Specifically, we show:

\begin{thm*}
Let $(X, \tau)$ be a topological space satisfying the descending chain condition on its open sets and such that $\tau \ne \{\emptyset\}.$ Then $X$ has an abundant element of $\tau.$
\end{thm*}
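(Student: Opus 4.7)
The plan is to combine the optimal-element lemma with the existence of a unique smallest open neighborhood at each point, both of which DCC on $\tau$ makes available. First I would note that $(\tau, \subseteq)$ is closed under (arbitrary) unions and is nontrivial, with $U_\tau = X$ since $X \in \tau$; the lemma then yields an optimal $x \in X$, meaning $\tau_x$ is maximal in $(\mathscr N(\tau), \subseteq)$. Next, setting $\mc U_x = \{U \in \tau : x \in U\}$, DCC applied to $\mc U_x \subseteq \tau$ produces a minimal element $V_x$, and uniqueness of the minimum follows because any two minimal elements $V, V'$ of $\mc U_x$ satisfy $V \cap V' \in \mc U_x$, which by minimality of both forces $V = V' = V \cap V'$. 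Thus $V_x$ is the smallest open set containing $x$.

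The decisive observation is that for every $y \in V_x$, $\tau_y \supseteq \tau_x$: any open $U$ containing $x$ satisfies $V_x \subseteq U$ by minimality of $V_x$, hence $y \in U$. Optimality of $x$ upgrades this inclusion to an equality $\tau_y = \tau_x$ for all $y \in V_x$. Consequently, if $U \in \tau \setminus \tau_x$ and some $y \in V_x$ were to lie in $U$, we would have $U \in \tau_y = \tau_x$, a contradiction; hence $U \cap V_x = \emptyset$ for every $U \in \tau \setminus \tau_x$.

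Finally, I would define $\phi : \tau \setminus \tau_x \to \tau_x$ by $\phi(U) = U \cup V_x$. This is well-defined since $U \cup V_x$ is open and contains $x$, and it is injective because $\phi(U_1) = \phi(U_2)$ combined with $U_i \cap V_x = \emptyset$ forces $U_1 = U_2$ upon intersecting both sides with $X \setminus V_x$. I expect the main subtlety to be the interplay between the minimality of $V_x$ and the optimality of $x$ in the previous paragraph, which together force the disjointness $U \cap V_x = \emptyset$ whenever $x \notin U$; once that is established, the injection $\phi$ is essentially free and the remaining steps are bookkeeping.
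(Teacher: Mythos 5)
Your proof is correct, and it shares the paper's two main ingredients --- the existence of an optimal element $x$ (Lemma \ref{optimalsExist}, applicable since $\tau$ is closed under arbitrary unions and satisfies DCC) and the existence of a smallest open neighborhood $V_x$ of $x$ (the Alexandroff-type argument via DCC plus closure under finite intersections) --- but it finishes differently. The paper first passes to the separating quotient of Section \ref{separatingSection}, so that optimality forces $V_x = I_x = \{x\}$; it then invokes the classical singleton result (Corollary \ref{containsASingleton}) to get abundance. You skip the separating reduction entirely: you observe that optimality gives $\tau_y = \tau_x$ for every $y \in V_x$, deduce that every open set $U$ with $x \notin U$ is \emph{disjoint} from $V_x$, and then check directly that $U \mapsto U \cup V_x$ is injective by intersecting with $X \setminus V_x$. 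In effect you show that $V_x$ behaves like a ``covert block'' playing the role of the singleton $\{x\}$ even when the space is not $T_0$. Both arguments are sound; the paper's is shorter given the machinery it has already built (the quotient construction and the singleton corollary), while yours is more self-contained and makes the injection completely explicit without any appeal to the separating reduction.
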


In the last section, we show that abundant elements can exist in many interesting families of sets that are not necessarily union-closed. Let $\alpha > 0$ be a cardinal number. An \textit{$\alpha$-tent} $\mathcal T$ is a poset $(\mc T, \le)$ of dimension one with $\alpha$ minimal nodes and a single greatest node. In the context of families of sets, we say a family of sets $\mc T$ is an $\alpha$-tent if $(\mc T, \subseteq)$ is an $\alpha$-tent. If $\mc F$ and $\mc G$ are families of sets, we say $\mc F$ \textit{dominates} $\mc G$ if for all $A \in \mc F$ there exists $B \in \mc G$ such that $A \supseteq B.$ Finally, we define $\mc F^* := \mc F\setminus\{\emptyset\}.$ We prove the following result:

\begin{thm*}
Let $\mc T$ be a union-closed $\alpha$-tent for some $\alpha > 1$ and let $\mc F$ be a family of sets. Let $\mc F^* := \mc F \setminus \{\emptyset\}.$ If $\mc F^*$ dominates $\mc T,$ then $\mc F \cup \mc T$ has an abundant element.
\end{thm*}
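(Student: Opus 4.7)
The plan is to exploit the rigidity imposed by union-closedness on the $\alpha$-tent: for any two distinct minimal elements $T_i, T_j$ of $\mc T$, the union $T_i \cup T_j$ must lie in $\mc T$ strictly above both, hence equals the top element $T$. Consequently $T\setminus T_i \subseteq T_j$ for all $j\ne i$, and $T\setminus T_i$ is nonempty since $T_i \subsetneq T$; one also notes $\emptyset \notin \mc T$, as otherwise a minimal $T_i=\emptyset$ would force $T_i\cup T_j = T_j \ne T$. Thus for every index $k$ we can choose $x_k \in T\setminus T_k$, and this $x_k$ lies in $T$ and in every $T_j$ with $j\ne k$, but not in $T_k$.

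Next I would partition $\mc U := \mc F \cup \mc T$ into $\mc U_\emptyset := \mc F \cap \{\emptyset\}$, $\mc U^{(T)} := \{A \in \mc U : A \supseteq T\}$, and $\mc U^{(i)} := \{A \in \mc U : A\supseteq T_i,\ A\not\supseteq T\}$ for each index $i$. These pieces are pairwise disjoint (a set containing two distinct $T_i, T_j$ must contain $T_i\cup T_j = T$) and cover $\mc U$ because $\mc F^*$ dominates $\mc T$; moreover $T \in \mc U^{(T)}$ and $T_i \in \mc U^{(i)}$. Writing $N_T = |\mc U^{(T)}|$, $N_i = |\mc U^{(i)}|$, and $N_k^\pm$ for the number of $A \in \mc U^{(k)}$ with, respectively without, $x_k$, the choice of $x_k$ gives
\[
|\mc U_{x_k}| \;=\; N_T + \sum_{j\ne k} N_j + N_k^+, \qquad |\mc U \setminus \mc U_{x_k}| \;=\; |\mc U_\emptyset| + N_k^-,
\]
since every set in $\mc U^{(T)}$ or in $\mc U^{(j)}$ for $j\ne k$ automatically contains $x_k$.

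For finite $\alpha$, summing $|\mc U_{x_k}|-|\mc U \setminus \mc U_{x_k}|$ over $k$ yields $\alpha N_T + (\alpha-2)\sum_j N_j + 2\sum_k N_k^+ - \alpha|\mc U_\emptyset|$, which is $\ge \alpha(N_T-|\mc U_\emptyset|)\ge 0$ using $N_T\ge 1$, $|\mc U_\emptyset|\le 1$, and $\alpha\ge 2$; hence some summand is non-negative, giving an abundant $x_k$. For infinite $\alpha$, this averaging fails and one must argue by cardinal arithmetic: if some $N_k \le \alpha$, then $|\mc U\setminus \mc U_{x_k}|\le 1+N_k\le \alpha\le |\mc U_{x_k}|$; otherwise every $N_k$ strictly exceeds $\alpha$, so picking $k^*$ that minimizes the cardinal $N_{k^*}$ (possible by well-ordering of cardinals, and using $\alpha > 1$ to guarantee some $l\ne k^*$) gives $|\mc U_{x_{k^*}}|\ge N_l\ge N_{k^*}\ge |\mc U\setminus \mc U_{x_{k^*}}|$. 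The main obstacle is the infinite-cardinal case, where the clean averaging must be replaced by case analysis on the cardinals $N_k$, and where the hypothesis $\alpha > 1$ is essential to ensure a second index is available for comparison.
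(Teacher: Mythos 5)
Your setup is sound and genuinely different in organization from the paper's proof: you partition $\mc U = \mc F \cup \mc T$ according to which minimal node of $\mc T$ a set contains, whereas the paper works with the up-sets $M^{\ua}$ of the minimal nodes inside $\mc F^* \cup \mc T$ and chooses a node whose up-set has \emph{maximal} cardinality (handling separately the case where no maximum exists). Your key observations --- that $T_i \cup T_j = U_{\mc T}$ forces $U_{\mc T}\setminus T_k \subseteq T_j$ for $j \ne k$, that the classes $\mc U^{(T)}, \mc U^{(i)}$ are pairwise disjoint and cover $\mc F^*\cup\mc T$, and that $\mc U \setminus \mc U_{x_k}$ is contained in $\mc U_\emptyset$ together with part of $\mc U^{(k)}$ --- are all correct and are essentially a repackaged form of the paper's use of Remark \ref{allButOne} and the inclusion $\mc G_x^c \subseteq N^{\ua}\setminus\{U_{\mc T}\}$.

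There is, however, a genuine gap in the case analysis: the case ``$\alpha$ finite but $\mc F$ infinite'' is covered by neither branch. Your averaging argument sums the integer differences $|\mc U_{x_k}| - |\mc U\setminus\mc U_{x_k}|$, which is only meaningful when all of $N_T, N_j, N_k^{\pm}, |\mc U_\emptyset|$ are finite; if, say, $\alpha = 2$ and $N_1$ is infinite, the differences are not well-defined integers and ``some summand is non-negative'' does not follow. Your second branch does not rescue this case either, since it explicitly uses $1 + N_k \le \alpha$ and $1 + N_{k^*} = N_{k^*}$, both of which require $\alpha$ (respectively $N_{k^*}$) to be infinite. The good news is that the repair lies entirely within your own framework and renders the averaging unnecessary: for \emph{any} $\alpha > 1$, choose $k^*$ minimizing the cardinal $N_{k^*}$ (possible since any nonempty set of cardinals has a least element) and any $l \ne k^*$; then by monotonicity of cardinal addition, $|\mc U_{x_{k^*}}| \ge N_T + N_l \ge 1 + N_{k^*} \ge |\mc U_\emptyset| + N_{k^*}^- = |\mc U \setminus \mc U_{x_{k^*}}|$, using $N_T \ge 1 \ge |\mc U_\emptyset|$ and $N_l \ge N_{k^*} \ge N_{k^*}^-$. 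This uniform argument is in effect the order-dual of the paper's choice (you minimize the ``exclusive'' up-set where the paper maximizes the full up-set), and it has the minor advantage that the extremal index always exists, so no analogue of the paper's ``$\max \mc M$ does not exist'' case is needed.
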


\section{Basic Definitions and Notation}

\begin{mydef}\label{posetDefinitions}
If $(X, \le)$ is a poset and $x \in X,$ define the \textit{down-set} of $x$ as $x^{\da} := \{y \in X: y \le x\}$ and the \textit{up-set} of $x$ as $x^{\ua} := \{y \in X: y \ge x\}.$ A \textit{chain} is a subset $C \subseteq X$ such that for all $x, y \in C,$ we have $x \le y$ or $y \le x.$ The \textit{length} $\ell(C)$ of a finite nonempty chain $C$ is defined as $\ell(C) = |C| - 1.$ We define the \textit{dimension}\footnote{The author's training is in commutative algebra where the dimension of $(\Spec R, \subseteq)$ is defined this way, inspired by Krull dimension for commutative rings.} of $X$ to be $\dim X := \sup\{\ell(C): C\text{ is a chain in }X\}.$ If $x \in X,$ we define the \textit{height} of $x$ to be $\height_X x := \dim x^{\da}$ and the \textit{coheight} of $x$ to be $\coht_X x := \dim x^{\ua}.$ If $x, y \in X,$ then $y$ \textit{covers} $x$ in $X$ if $x < y$ and for all $z \in X,$ if  $x \le z \le y,$ we have $x = z$ or $z = y.$ If $y$ covers $x$ in $X,$ we will write $x <_c y.$ An element $x \in X$ is \textit{maximal} (resp. \textit{minimal}) in $X$ if for all $y \in X,$ $x \le y$ (resp. $y \le x$) implies $x = y.$ The set of maximal elements (resp. minimal elements) is denoted $\max X$ (resp. $\min X$). Finally, a poset $(X, \le)$ satisfies the descending chain condition (DCC) (resp. ascending chain condition (ACC)) if every nonempty subset of $X$ has a minimal (resp. maximal) element. 
\end{mydef}

\begin{mydef}\label{familyDefinitions}
A family of sets $\mc F$ is a subset of some power set. The universe $U_{\mc F}$ of $\mc F$ is defined as $U_{\mc F} := \cup_{A \in \mc F} A$ and $\mc F$ is \textit{nontrivial} if $\mc F$ is nonempty and $U_{\mc F} \ne \emptyset.$ If $x \in U_{\mc F},$ we define $\mc F_x := \{A \in \mc F: x \in A\},$ and we define $\mc F_x^c := \mc F\setminus \mc F_x.$ We define $\mathscr N(\mc F) = \{\mc F_x: x \in U_{\mc F}\}.$ A family $\mc F$ is \textit{separating} if the map $x \to \mc F_x$ is injective and it is \textit{union-closed} if the union of any two members of $\mc F$ is still in $\mc F.$ A family is \textit{countably union-closed} if it is closed under countable unions of members in the family. An element $x \in U_{\mc F}$ is \textit{abundant in} (not necessarily union-closed) $\mc F$ if there exists an injective set map $\mc F_x^c \hookrightarrow \mc F_x.$ An element $x \in U_{\mc F}$ is \textit{optimal in} $\mc F$ if $\mc F_x$ is a maximal element of $(\mathscr N(\mc F), \subseteq).$
\end{mydef}

\begin{remark}\label{minFmaxF}
If $\mc F$ is a family of sets, $\min \mc F$ (resp. $\max \mc F$) will take its meaning from Definition \ref{posetDefinitions} applied to $(\mc F, \subseteq).$ Note also that if $\mc F$ is nonempty and $(\mc F, \subseteq)$ satisfies DCC, then $\min \mc F$ is nonempty. A similar statement holds if $(\mc F, \subseteq)$ satisfies ACC.
\end{remark}

\section{Chain Conditions and Optimal Elements}

 In the search for abundant elements, it is most natural to look at elements $x$ corresponding to sets $\mc F_x$ of maximal cardinality. However, this presents some issues. In the infinite case, for instance, it can happen that $\mc F_x \subsetneq \mc F_y$ yet $|\mc F_x| = |\mc F_y|.$ As will be discussed in Remark \ref{whyOptimalsAreImportant}, this particular situation makes generalizing to the infinite case somewhat subtle.  Moreover, if $\mc F_x$ has maximal cardinality in $\mathscr N(\mc F),$ it is unclear what structural implications exist in $(\mc F, \subseteq)$ as a result. In other words, cardinality alone will not be sufficient to distinguish the elements of $\mathscr N(\mc F)$ in an immediately useful way. As we show in this section, optimal elements have the advantage of providing some structural insights into $(\mc F, \subseteq)$ when $\mc F$ is assumed to be union-closed and of low dimension. Most importantly, they allow us to provide arguments establishing abundance in low dimension that do not depend on cardinality. Although determining when optimal elements exist is a subtle matter in the most general case, Lemma \ref{optimalsExist} provides a useful criterion which will suit our purposes herein.

\subsection{The union-closed hypothesis and ACC} If $\mc F$ is union-closed, then $\mc F$ contains all finite unions of members of $\mc F,$ and if $\mc F$ is also finite, then it follows that $U_{\mc F} \in \mc F.$ However, if $\mc F$ is not finite, then it is not necessarily the case that $U_{\mc F} \in \mc F.$ Consider, for instance, $\mc F := \{[n]: n \in \mathbb N\},$ where $[n] = \{1, \ldots, n\}.$ Then $\mc F$ is  union-closed, but $U_{\mc F} = \cup_{n \in \mathbb N} [n] = \mathbb N \notin \mc F.$ In other words, if one wishes to conduct a study of general union-closed families, one must consider whether to relax Definition \ref{familyDefinitions} to allow for unions over infinite indexing sets. As the next lemma shows, however, if $(\mc F, \subseteq)$ satisfies the ascending chain condition, then no information is lost by using the ``finite version" of the union-closed hypothesis.

\begin{lem}\label{accImpliesArbitraryClosure}
If $(\mc F, \subseteq)$ is a family of sets that satisfies ACC, then $\mc F$ is closed under finite unions of sets if and only if $\mc F$ is closed under arbitrary unions of sets.
\end{lem}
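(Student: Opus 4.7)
The plan is to prove the nontrivial direction (finite-union closure implies arbitrary-union closure) by using ACC to locate a maximal finite subunion and showing that it already captures the entire union. The reverse implication is immediate since any binary union is a special case of an arbitrary union.

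For the main direction, suppose $\mc F$ is closed under binary unions (hence, by induction, under all nonempty finite unions) and let $\{A_i\}_{i \in I}$ be an arbitrary nonempty indexed subcollection of $\mc F$. I would introduce the auxiliary family
\[
\mc G \;:=\; \Bigl\{\, \bigcup_{i \in J} A_i \;:\; J \subseteq I,\ J \text{ finite and nonempty}\Bigr\},
\]
and observe that $\mc G \subseteq \mc F$ by the finite-union hypothesis. Since $\mc G$ is a nonempty subset of $\mc F$, the ascending chain condition applied to $(\mc F,\subseteq)$ (and restricted to $\mc G$, via Remark \ref{minFmaxF}'s dual form) yields a maximal element $B \in \mc G$, say $B = \bigcup_{i \in J_0} A_i$ with $J_0 \subseteq I$ finite.

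The key step is to show $B = \bigcup_{i \in I} A_i$. For any $i \in I$, the set $B \cup A_i = \bigcup_{j \in J_0 \cup \{i\}} A_j$ lies in $\mc G$ and contains $B$. Maximality of $B$ in $\mc G$ forces $B \cup A_i = B$, i.e., $A_i \subseteq B$. Taking the union over $i \in I$ gives $\bigcup_{i \in I} A_i \subseteq B$, while the reverse inclusion is immediate since $B$ is itself a union of certain $A_i$'s. Hence $\bigcup_{i \in I} A_i = B \in \mc G \subseteq \mc F$, completing the proof.

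There is no real obstacle here: the argument is a clean application of ACC to the directed system of finite subunions. The only point requiring slight care is making sure one is quantifying over \emph{nonempty} index sets (so that one stays within the stated form of union-closure, which is binary), but this is harmless since an empty union would only produce $\emptyset$, a case orthogonal to the statement.
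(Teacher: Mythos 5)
Your proof is correct and follows essentially the same route as the paper: both form the family of finite subunions, extract a maximal element via ACC, and show it equals the full union (the paper phrases the last step as a contradiction with a witness element, while you argue directly that each $A_i \subseteq B$ by maximality — an immaterial difference).
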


\begin{proof}
Suppose $\mc F$ is closed under finite unions of sets. Let $\{A_i\}_{i \in I}$ be a collection of sets in $\mc F$ indexed by some nonempty set $I.$ Let $\mathcal B:=\{\cup_{i \in F} A_i: F\text{ is a finite, nonempty subset of }I\}.$ Since $I$ is nonempty, so is $\mc B.$ Moreover, since $\mc F$ is closed under finite unions, we have $\mc B \subseteq \mc F$ so that $(\mc B, \subseteq)$ satisfies ACC as well. By the ACC hypothesis, $(\mc B, \subseteq)$ has a maximal element $B.$ Relabeling elements of $I$ if necessary, assume $B = A_1 \cup \ldots \cup A_N$ for some $N \in \mathbb N.$ If $x \in \cup_{i \in I} A_i \setminus B,$ then there is $i \in I$ such that $x \in A_i \setminus (A_1 \cup \ldots \cup A_N).$ So $B \subsetneq A_i \cup (A_1 \cup \ldots \cup A_N) \in \mc B,$ which contradicts $B$ being a maximal element of $(\mc B, \subseteq).$ So $\cup_{i \in I}A_i \subseteq B$ and hence is $B.$ The other direction is immediate. \end{proof}

\begin{remark}\label{hasAGreatestElt} As an immediate corollary, if $\mc F$ is a nonempty family of sets that satisfies ACC and is union-closed, then $U_{\mc F}$ is the greatest element of $(\mc F, \subseteq).$ In particular, if $x \in U_{\mc F},$ the subfamily $\mc F_x^c$ is also union-closed and hence has a greatest element if it is nonempty.
\end{remark}

\subsection{Optimal elements and DCC} 

As mentioned in the introduction, if $\mc F=\{\mathbb N \setminus [i]: i \in \mathbb N\}\cup\{\mathbb N\},$ then $(\mc F, \subseteq)$ has no abundant elements and also fails DCC. In addition to failing DCC, one may also notice that $\mc F_1 \subsetneq \mc F_2 \subsetneq \mc F_3 \subsetneq \ldots,$ so no $\mc F_a$ is maximal in $(\mathscr N(\mc F), \subseteq).$ Hence $\mc F$ has no optimal elements as in Definition \ref{familyDefinitions}. Interestingly, $(\mc F, \subseteq)$ not having the descending chain condition resulted in $(\mathscr N(\mc F), \subseteq)$ not having the ascending chain condition. As the next result shows, this is no coincidence for countably union-closed families:

\begin{lem}\label{optimalsExist}
Suppose $\mc F$ is a countably union-closed family of sets. If $(\mc F, \subseteq)$ satisfies DCC, then $(\mathscr N(\mc F), \subseteq)$ satisfies ACC. Consequently, if $(\mc F, \subseteq)$ satisfies DCC and $a \in U_{\mc F},$ then there exists an optimal $b \in U_{\mc F}$ such that $\mc F_a \subseteq \mc F_b.$
\end{lem}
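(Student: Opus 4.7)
The plan is to prove the contrapositive of the first statement: if $(\mathscr N(\mc F), \subseteq)$ fails ACC, then $(\mc F, \subseteq)$ fails DCC. Given a strictly ascending chain $\mc F_{x_1} \subsetneq \mc F_{x_2} \subsetneq \mc F_{x_3} \subsetneq \ldots$ in $\mathscr N(\mc F)$, I would first extract its structural content. The inclusion $\mc F_{x_i} \subseteq \mc F_{x_{i+1}}$ says that every member of $\mc F$ containing $x_i$ also contains $x_{i+1}$, so by iterating, any $A \in \mc F$ that contains some $x_i$ contains every $x_j$ with $j \ge i$. Equivalently (contrapositive): if $x_i \notin A$ then $x_j \notin A$ for all $j \le i$.

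Next I would exploit strictness: for each $i$, choose $B_i \in \mc F_{x_{i+1}} \setminus \mc F_{x_i}$. Then $x_{i+1} \in B_i$ and, since $x_i \notin B_i$, the contrapositive above forces $x_j \notin B_i$ for all $j \le i$. So $B_i$ contains precisely the tail $\{x_j : j \ge i+1\}$ among our distinguished elements. Now I would close up under countable unions: set
\[ C_n := \bigcup_{i \ge n} B_i, \qquad n \in \mathbb{N}. \]
Since $\mc F$ is countably union-closed, each $C_n \in \mc F$. Monotonicity of the index sets gives $C_{n+1} \subseteq C_n$. Strictness follows from checking $x_{n+1}$: it lies in $B_n \subseteq C_n$, but for every $i \ge n+1$ we have $n+1 \le i$, so $x_{n+1} \notin B_i$, hence $x_{n+1} \notin C_{n+1}$. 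Thus $C_1 \supsetneq C_2 \supsetneq C_3 \supsetneq \ldots$ is a strictly descending chain in $\mc F$, contradicting DCC. This is the main piece of content; I expect the bookkeeping of which $x_j$ lies in which $B_i$ to be the most delicate step, but the contrapositive argument above makes it clean.

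For the consequence, given $a \in U_{\mc F}$ I would consider the nonempty subfamily
\[ \mathscr{S} := \{\mc F_x \in \mathscr N(\mc F) : \mc F_a \subseteq \mc F_x\} \subseteq \mathscr N(\mc F). \]
By the ACC just proved, $\mathscr{S}$ has a maximal element $\mc F_b$. If $\mc F_b$ were not maximal in all of $\mathscr N(\mc F)$, any strictly larger $\mc F_c$ would satisfy $\mc F_a \subseteq \mc F_b \subsetneq \mc F_c$, placing $\mc F_c$ in $\mathscr{S}$ and contradicting maximality of $\mc F_b$ there. Hence $b$ is optimal and $\mc F_a \subseteq \mc F_b$, finishing the proof.
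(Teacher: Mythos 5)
Your proposal is correct and follows essentially the same route as the paper: choose a witness $B_i \in \mc F_{x_{i+1}} \setminus \mc F_{x_i}$ for each strict inclusion, take countable unions of tails to produce a strictly descending chain violating DCC, and then extract an optimal element from the up-set of $\mc F_a$ via ACC. The only differences are cosmetic (an index shift in the witnesses and an explicit verification, glossed over in the paper, that a maximal element of the up-set is maximal in all of $\mathscr N(\mc F)$).
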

\begin{proof}
Let $U_{\mc F}$ be the universe of $\mc F$ and suppose $(\mathscr N(\mc F), \subseteq)$ does not satisfy ACC. Then there exist $x_1, x_2, x_3, \ldots \in U_{\mc F}$ such that $\mc F_{x_1} \subsetneq \mc F_{x_2} \subsetneq \mc F_{x_3} \subsetneq \ldots$ is a non-terminating ascending chain in $(\mathscr N(\mc F), \subseteq).$ Let $X_1 \in \mc F_{x_1}$ and for each $i > 1,$ let $X_i \in \mc F_{x_i}\setminus \mc F_{x_{i-1}}.$ Observe that $x_i \in X_i$ for all $i \ge 1,$ and if $1 \le i' < i,$ then $x_{i'} \notin X_i.$ Having chosen $X_i$ for all $i \ge 1,$ let $E_j := \cup_{j = i}^{\infty} X_j.$ Since $\mc F$ is countably union-closed, $E_j \in \mc F$ for all $j \ge 1.$ Moreover, $E_1 \supsetneq E_2 \supsetneq E_3 \supsetneq \ldots$ is a non-terminating descending chain in $(\mc F, \subseteq)$ because $x_i \in E_i \setminus E_{i+1}$ for all $i \ge 1.$ So $(\mc F, \subseteq)$ does not satisfy DCC. For the second part, simply observe that $\mc F_a^{\ua}$ is a nonempty subset of $\mathscr N(\mc F).$ Since $(\mc F, \subseteq)$ satisfies DCC, $(\mathscr N(\mc F), \subseteq)$ satisfies ACC, so $(\mc F_a^{\ua}, \subseteq)$ has a maximal element. \end{proof}
\begin{remark}\label{remark1}
The converse is false. For example, $(\mathcal P(\mathbb N), \subseteq)$ does not satisfy DCC yet $\mc F_m$ is maximal in $(\mathscr N (\mathcal P(\mathbb N)), \subseteq)$ for all $m \in \mathbb N$ because if $\mc F_m \subseteq \mc F_{m'},$ then $\{m\} \in \mc F_{m'} \implies m' = m.$
\end{remark}

As an immediate consequence, we have the following:
\begin{cor}
If $(\mc F, \subseteq)$ is a finite-dimensional, nontrivial, union-closed family, then $\mc F$ is closed under arbitrary unions and has an optimal element.
\end{cor}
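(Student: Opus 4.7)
The plan is to observe that the corollary is essentially the concatenation of the two preceding lemmas, with the key observation being that finite dimension forces both chain conditions on $(\mc F, \subseteq)$ simultaneously.

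First I would verify that if $\dim \mc F < \infty$, then $(\mc F, \subseteq)$ satisfies both ACC and DCC. This is immediate from the definition of dimension in Definition \ref{posetDefinitions}: any strict ascending or descending chain in $\mc F$ is itself a chain, so its length is bounded by $\dim \mc F$, hence it must be finite (and therefore terminate). This gives both chain conditions at once.

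With ACC in hand and $\mc F$ union-closed (i.e., closed under finite unions), Lemma \ref{accImpliesArbitraryClosure} applies directly to conclude that $\mc F$ is closed under arbitrary unions. In particular, $\mc F$ is countably union-closed, which is the hypothesis needed for the second lemma.

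For the existence of an optimal element, I would use DCC together with the countable union-closure just established and invoke Lemma \ref{optimalsExist}. Since $\mc F$ is nontrivial, $U_{\mc F}$ is nonempty, so we may pick any $a \in U_{\mc F}$; Lemma \ref{optimalsExist} then furnishes an optimal $b \in U_{\mc F}$ with $\mc F_a \subseteq \mc F_b$, completing the proof. There is no real obstacle here — the entire content of the corollary is the observation that finite-dimensionality is exactly the strong hypothesis that simultaneously triggers both lemmas, so the argument amounts to a three-line bookkeeping of implications.
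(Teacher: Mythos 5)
Your proposal is correct and follows exactly the same route as the paper: finite dimension gives both ACC and DCC, ACC plus Lemma \ref{accImpliesArbitraryClosure} gives closure under arbitrary (hence countable) unions, and DCC plus Lemma \ref{optimalsExist} (using nontriviality to pick a point of $U_{\mc F}$) gives an optimal element. No differences worth noting.
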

\begin{proof}
If $(\mc F, \subseteq)$ is finite-dimensional, then it is both ACC and DCC. By Lemma \ref{accImpliesArbitraryClosure}, it is closed under arbitrary unions (hence countable unions). So it has an optimal element by Lemma \ref{optimalsExist}.
\end{proof}

\subsection{Covert elements} A classical result towards the union-closed conjecture states that if $\{x\} \in \mc F$ for some $x,$ then $x$ is abundant in $\mc F.$ One proves this result by considering the injective map $A \to A \cup\{x\}$ which is well-defined by hypothesis. Interestingly, it can still happen that the map is well-defined even though $\{x\} \notin \mc F.$ Consider the following example.  

\begin{example}\label{covertElementEx}
In the following figure, the reader will observe that $\{3\} \notin \mc F,$ yet the map $A \to A\cup\{3\}$ from $\mc F_3^c \hookrightarrow \mc F_3$ is well-defined. 

\begin{center}
\begin{tikzpicture}
\node (a12) at (0,1) {$\{1,2,3,4\}$};
\node (a21) at (0, 0) {$\{1, 2,4\}$};
\node (a23) at (2, 0) {$\{2, 3,4\}$};
\node (a31) at (-1, -1) {$\{1, 2\}$};
\node (a32) at (1, - 1) {$\{2, 4\}$};
\node (a20) at (-2, 0) {$\{1, 2, 3\}$};
\draw (a12) -- (a21);
\draw (a12) -- (a23) -- (a32);
\draw (a31) -- (a21);
\draw (a32) -- (a21);
\draw (a12) -- (a20) -- (a31);
\end{tikzpicture}
\end{center}
\end{example}

In this case, we refer to $x = 3$ as a \textit{covert element.} More precisely, we say $x$ is \textit{covert} if $\{x\} \notin \mc F$ yet the map $A \to A \cup \{x\}$ is well-defined. As the next result shows, if one wishes to show that $x$ is covert, then under mild conditions, it suffices to check that the map $A \to A \cup\{x\}$ is well-defined along the ``bottom row" (i.e. minimal nodes) of $\mc F_x^c.$ In the case of Example \ref{covertElementEx}, for instance, this amounts to checking along $\min \mc F_3^c = \{ \{1, 2\}, \{2, 4\}\}.$

\begin{lem}\label{minimal}\label{nice1}
Suppose $\mc F$ is union-closed, $(\mc F, \subseteq)$ satisfies DCC, and $x \in U_{\mc F}.$ If there exists $A \in \mc F_x^c$ such that $A \cup \{x\} \notin \mc F,$ then there exists $B \in \min \mc F_x^c$ such that $B \cup \{x\} \notin \mc F.$ Consequently, if $A \cup \{x\} \in \mc F_x$ for all $A \in \min \mc F_x^c,$ then $x$ is abundant in $\mc F.$
\end{lem}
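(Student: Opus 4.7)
The plan is to use DCC to descend from a witness $A \in \mc F_x^c$ with $A \cup \{x\} \notin \mc F$ to a $\subseteq$-minimal such witness, then use the union-closed hypothesis to upgrade ``minimal among witnesses'' to ``minimal in $\mc F_x^c$.'' Finally, the second assertion follows formally by contrapositive.

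First I would define the set of witnesses
\[
S := \{C \in \mc F_x^c : C \cup \{x\} \notin \mc F\}.
\]
By hypothesis $A \in S$, so $S$ is a nonempty subset of $\mc F$. Since $(\mc F, \subseteq)$ satisfies DCC, Remark \ref{minFmaxF} gives a minimal element $B$ of $S$. The main step is to show $B \in \min \mc F_x^c$. Suppose for contradiction that there is $B' \in \mc F_x^c$ with $B' \subsetneq B$. By minimality of $B$ in $S$, we must have $B' \cup \{x\} \in \mc F$. Then $B \in \mc F$ and $B' \cup \{x\} \in \mc F$, so union-closedness forces
\[
(B' \cup \{x\}) \cup B \;=\; B \cup \{x\} \;\in\; \mc F,
\]
contradicting $B \in S$. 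This proves the first assertion.

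For the ``consequently'' clause, the assumption that $A \cup \{x\} \in \mc F_x$ for every $A \in \min \mc F_x^c$ is precisely the contrapositive of the first assertion, so it upgrades to the statement that $A \cup \{x\} \in \mc F$ for every $A \in \mc F_x^c$ (and such a set contains $x$, hence lies in $\mc F_x$). The map $\mc F_x^c \to \mc F_x$ defined by $A \mapsto A \cup \{x\}$ is then well-defined, and it is injective because $x \notin A$ lets one recover $A$ as $(A \cup \{x\}) \setminus \{x\}$. Hence $x$ is abundant in $\mc F$.

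The main obstacle is the step that promotes minimality within $S$ to minimality within $\mc F_x^c$: a priori, one might have a strictly smaller $B' \subsetneq B$ in $\mc F_x^c$ that still lies in $\mc F$, which is exactly what the union-closed hypothesis rules out via the short union computation above. Everything else is either a direct application of DCC (to produce $B$) or a bookkeeping argument about the injective map $A \mapsto A \cup \{x\}$.
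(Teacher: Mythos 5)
Your proposal is correct and follows essentially the same route as the paper: the same witness set, the same use of DCC to extract a minimal witness $B$, and the same union computation $(B' \cup \{x\}) \cup B = B \cup \{x\}$ to derive the contradiction, with the second assertion handled by the same well-definedness-plus-injectivity argument for $A \mapsto A \cup \{x\}$.
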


\begin{proof}
Let $\mathscr A = \{A \in \mc F_x^c: A \cup \{x\} \notin \mc F\}.$ Since $\mathscr A \ne \emptyset,$ it has a minimal element $B$ because $(\mc F, \subseteq)$ satisfies DCC. We claim $B \in \min \mc F_x^c.$ Assume the contrary. Then there exists $B' \in \mc F_x^c$ such that $B' \subsetneq B.$ Since $B' \in \mc F_x^c$ and $B' \notin \mathscr A,$ we have $B'\cup\{x\} \in \mc F.$ Note $B = B \cup B'.$ Hence, $$B\cup\{x\} = (B \cup B') \cup \{x\} = B \cup (B' \cup \{x\}) \in \mc F,$$ where the last assertion holds because $\mc F$ is union-closed. But this contradicts $B \in \mathscr A.$ Therefore, $B \in\min \mc F_x^c.$ For the second part, if $A \cup \{x\} \in \mc F_x$ for all $A \in \min \mc F_x^c,$ then by what we have justh shown, it follows that $A \cup \{x\} \in \mc F_x$ for all $A \in \mc F_x^c.$ So the map $A \to A\cup\{x\}$ is well-defined and of course injective. 
\end{proof}

\begin{example}\label{basisSetExample}
If $\mc F$ is a union-closed family and $B \in \mc F$, recall $B$ is a \textit{basis set} in $\mc F$ if for all $X, Y \in \mc F,$ whenever $B = X \cup Y,$ then $B = X$ or $B = Y$ (e.g., see Section 2 of \cite{BruhnSchaudt}). Indeed, sets in $\min \mc F$ (see Remark \ref{minFmaxF}) are necessarily basis sets in $\mc F,$ and if $(\mc F, \subseteq)$ satisfies DCC and $X \in \mc F,$ then there exists $M \in (\min \mc F) \cap X^{\da}$ (see Definition \ref{posetDefinitions}). Basis sets need not exist in general, however. Such examples are necessarily infinite. As a straightforward example, consider $\mc F = \{A \subseteq \mathbb N: A\text{ is infinite}\}.$ Then $\mc F$ is certainly union-closed. If $A \in \mc F,$ let $x_1 < x_2$ be the two smallest elements of $A.$ Then $A_1 = A\setminus\{x_1\}$ and $A_2 = A\setminus \{x_2\}$ are both in $\mc F$ and $A = A_1 \cup A_2.$ So $A$ is not a basis set. Although this example has no basis elements, every positive integer is nevertheless abundant in $\mc F.$ In fact, if $x \in \mathbb N,$ then $\mc F_x^c$ is in bijection with $\mc F_x$ via the classical map $A \to A \cup \{x\}$ even though $\{x\} \notin \mc F.$ In other words, although Lemma \ref{nice1} does not apply in this case, every integer is nevertheless covert. Notably, every element is also optimal in $\mc F$ even though $(\mc F, \subseteq)$ does not satisfy DCC: indeed, if $a, b \in \mathbb N$ are distinct, then $\mathbb N \setminus \{b\} \in \mc F_a \setminus \mc F_b.$ So $(\mathscr N(\mc F), \subseteq)$ is an antichain (just as in Remark \ref{remark1}) and each $\mc F_a$ is maximal. 
\end{example}

\subsection{The separating condition}\label{separatingSection} Let $\mc F$ be a union-closed family. Establish an equivalence relation $\sim$ on $U_{\mc F}$ as $x \sim y$ if and only if $\mc F_x = \mc F_y.$ Let $V = U_{\mc F}/\sim$ with map $[\cdot ]: U_{\mc F} \to V$ defined as $x \to [x].$ If $A \in \mc F,$ then $[A] = \{[a]: a \in A\}$ and we may define $\mc S:=\{[A]: A \in \mc F\}.$ Then $\mc S$ is a family of sets with universe $U_{\mc S} = V.$ Note that $\mc S$ is separating: if $\mc S_{[x]} = \mc S_{[y]}$ and $A \in \mc F_x,$ then $[x] \in [A]$ so $[A] \in \mc S_{[x]} = \mc S_{[y]}.$ Hence $[y] \in [A]$ so $[y] = [a']$ for some $a' \in A.$ Thus $\mc F_y = \mc F_{a'}$ and since $A \in \mc F_{a'},$ we have $A \in \mc F_y.$ So $\mc F_x \subseteq \mc F_y$ and a similar argument gives $\mc F_y \subseteq \mc F_x.$ So $[x] = [y].$ In addition, $[\cdot]$ induces a poset isomorphism of $(\mc F, \subseteq)$ with $(\mc S, \subseteq).$ That $[\cdot]$ preserves order and is surjective is clear, so all that remains to show is that it is an order embedding. Indeed, if $[A] \subseteq [B]$ and $a \in A,$ then $[a] = [b]$ for some $b \in B$ so $\mc F_a = \mc F_b$ hence $B \in \mc F_a \implies a \in B.$ So $A \subseteq B.$

Let $\{X_i: i \in I\} \subseteq \mc F$ be a nonempty collection of sets in $\mc F.$  We claim $[\cup_{i \in I} X_i] = \cup_{i \in I} [X_i]$ and $[\cap_{i \in I} X_i] = \cap_{i \in I} [X_i].$ The first assertion is clear. For the second assertion, if $[x] \in \cap_{i \in I} [X_i],$ then for all $i \in I,$ there exists $x_i \in X_i$ such that $[x] = [x_i].$ Fix $i_0 \in I.$ Then $[x_{i_0}] = [x_i]$ for all $i \in I.$ So $\mc F_{x_{i_0}} = \mc F_{x_i}$ for all $i \in I.$ Since $X_i \in \mc F_{x_i},$ we have $X_i \in \mc F_{x_{i_0}}$ so that $ x_{i_0} \in X_i.$ So $x_{i_0} \in \cap_{i \in I} X_i.$ That is, $[x] = [x_{i_0}] \in [\cap_{i \in I}X_i].$ That $[\cap_{i \in I} X_i] \subseteq \cap_{i \in I} [X_i]$ is straightforward. In particular, if $\mc F$ is union-closed (resp. intersection-closed), then $\mc S$ is union-closed (resp. intersection-closed).

Lastly, we claim $[\cdot]$ preserves abundance and optimality. First, note that for all $x \in U_{\mc F}$ we have $x \in A \iff [x] \in [A].$ So $[\mc F_x] = \mc S_{[x]}$ and $[\mc F_x^c] = \mc S_{[x]}^c.$ Suppose $[x]$ is abundant in $\mc S.$ Then there exists an injective map $\psi: \mc S_{[x]}^c \hookrightarrow \mc S_{[x]}.$ Then $\vp: \mc F_x^c \to \mc F_x$ defined as $\vp(A):=[\psi([A])]^{-1}$ is an injective map from $\mc F_x^c$ into $\mc F_x;$ recall that although $[\cdot]$ is not invertible as a map from $U_{\mc F}$ onto $V,$ it is invertible as an induced map from $\mc F$ onto $\mc S.$ So $x$ is abundant in $\mc F.$ A similar argument shows that if $x$ is abundant in $\mc F,$ then $[x]$ is abundant in $\mc S.$ If $\mc F_x$ is maximal in $\mathscr N(\mc F)$ and $\mc S_{[x]} \subseteq \mc S_{[y]},$ then $$\mc F_x = \left [ \mc S_{[x]} \right ]^{-1} \subseteq \left [ \mc S_{[y]} \right ]^{-1} = \mc F_y \implies \mc F_x = \mc F_y \implies [x] = [y].$$ So $\mc S_{[x]}$ is maximal in $\mathscr N(\mc S),$ and $[x]$ is optimal in $\mc S.$ As before, a similar argument shows the converse.

In summary, taking a union-closed family $\mc F$ and reducing to $\mc S$ as above replaces $\mc F$ with a separating union-closed family $\mc S$ whose structure is indistinguishable from $\mc F$ from the point-of-view of order. 

\subsection{Union-closed families of dimension at most two} In this section, we show that every union-closed family of dimension at most two -- whether it is infinite or not -- has an abundant element. In fact, we show that every optimal element is abundant in such families. Notably, none of our arguments in this section depend on the cardinality of the family or the size of its universe. 

\begin{prop}\label{dimAtMostOne}
If $\mc F$ is a nontrivial union-closed family of dimension at most one, then every element in $U_{\mc F}$ is abundant in $\mc F.$   
\end{prop}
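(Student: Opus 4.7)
The plan is to exploit the strong structural constraint that $\dim \mc F \le 1$ places on $\mc F$. First I would observe that dimension at most one immediately gives both ACC and DCC on $(\mc F, \subseteq)$: given any nonempty subfamily $\mc G$ and any $A \in \mc G$, either $A$ is maximal (resp.\ minimal) in $\mc G$, or else some $B \in \mc G$ properly contains (resp.\ is properly contained in) $A$, and any such $B$ must itself be maximal (resp.\ minimal) since a further proper enlargement would create a chain of length two. By Lemma \ref{accImpliesArbitraryClosure} and Remark \ref{hasAGreatestElt}, it follows that $\mc F$ is closed under arbitrary unions and that $U_{\mc F}$ is its greatest element. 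The same dimension bound also forces the dichotomy $\mc F = \{U_{\mc F}\} \cup \min \mc F$: any element that is neither minimal nor equal to $U_{\mc F}$ would sit strictly between some minimal set and $U_{\mc F}$, again yielding a chain of length two.

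Now fix any $x \in U_{\mc F}$. Since $U_{\mc F} \in \mc F_x$, the complement $\mc F_x^c$ lies entirely in $\min \mc F$. The crux of the argument is the claim that at most one minimal set can fail to contain $x$: if $M_1, M_2 \in \min \mc F$ are distinct and neither contains $x$, then $M_1 \cup M_2 \in \mc F$ by union-closedness and strictly contains $M_1$, so by the dichotomy it must equal $U_{\mc F}$; but $x \in U_{\mc F}$ while $x \notin M_1 \cup M_2$, a contradiction. Hence $|\mc F_x^c| \le 1 \le |\mc F_x|$, and an injection $\mc F_x^c \hookrightarrow \mc F_x$ exists trivially, so $x$ is abundant.

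The argument is quite short, and I anticipate no serious obstacle. The only point worth double-checking is the degenerate case $\mc F = \{U_{\mc F}\}$, in which $\min \mc F = \{U_{\mc F}\}$ and every $x \in U_{\mc F}$ belongs to every set and is therefore trivially abundant. Notably, nothing in this plan depends on the cardinality of $\mc F$ or of $U_{\mc F}$, in keeping with the paper's broader theme.
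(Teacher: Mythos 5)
Your proof is correct, but it takes a more hands-on route than the paper's. The paper argues in two lines: $\mc F_x^c$ is itself union-closed, and since $x \in U_{\mc F}$ any chain in $\mc F_x^c$ can be lengthened by unioning its top with a member of $\mc F_x$, so $\dim \mc F_x^c < \dim \mc F \le 1$; a zero-dimensional union-closed family is a single set, whence $|\mc F_x^c| \le 1$. You instead first derive the global structure $\mc F = \{U_{\mc F}\} \cup \min \mc F$ (using ACC/DCC and Remark \ref{hasAGreatestElt}) and then show at most one minimal set omits $x$, reaching the same bound $|\mc F_x^c| \le 1$. Both arguments are cardinality-free and both immediately yield Remark \ref{allButOne}; the paper's dimension-reduction is shorter and closer in spirit to an induction on dimension, while yours makes the two-level shape of $\mc F$ explicit, which is the picture the paper later exploits (via $\min \mc F_x^c$ and $x$-covers) in Lemma \ref{nice1} and Theorem \ref{dim2}. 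One small point of care in your write-up: when you say $M_1 \cup M_2$ ``strictly contains $M_1$,'' this uses that $M_2 \not\subseteq M_1$, which follows from $M_1, M_2$ being distinct minimal members; it is worth saying so, but it is not a gap.
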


\begin{proof}
If $\mc F$ is zero dimensional, then $\mc F = \{U_{\mc F}\}$ is a point and the assertion is clear (see Remark \ref{hasAGreatestElt}). Assume $\dim \mc F = 1$ and let $x \in U_{\mc F}.$ Then $\mc F_x^c$ is union-closed, and since $x \in U_{\mc F},$ either $\mc F_x^c = \emptyset$ or it is nonempty and satisfies $\dim \mc F_x^c < \dim \mc F$. In the former case, $\mc F_x = \mc F$ and we are done. In the latter case, $\dim \mc F_x^c = 0$ which means that $\mc F_x^c$ is a point, so there is certainly an injective map $\mc F_x^c \hookrightarrow \mc F_x.$
\end{proof}
\begin{remark}\label{allButOne}
The proof of the previous proposition shows that if $\dim \mc F \le 1,$ then every element in $U_{\mc F}$ belongs to all but at most one member of $\mc F.$ 
\end{remark}

\begin{lem}\label{Ix}
Let $\mc F$ be a separating union-closed family, let $x \in U_{\mc F},$ and let $I_x = \cap_{F \in \mc F_x} F.$ If $x$ is optimal, then $I_x = \{x\}.$ Consequently, if $x$ is optimal and $A \in \mc F_x^c$ is such that $A \cup X = U_{\mc F}$ for all $X \in \mc F_x,$ then $U_{\mc F} = A \cup \{x\}.$ 
\end{lem}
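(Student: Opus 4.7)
The plan is to read off the first claim directly from the definitions of $I_x$, optimality, and the separating hypothesis, and then deduce the consequence by a pointwise argument using the hypothesis $A \cup X = U_{\mc F}$.

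For the first part, the key observation is that membership $y \in I_x$ unwinds to the statement that every $F \in \mc F_x$ contains $y$, which is precisely the containment $\mc F_x \subseteq \mc F_y$ in $\mathscr N(\mc F)$. I would first note that $x \in I_x$ trivially, giving one inclusion. For the reverse inclusion, I would take $y \in I_x$; then $\mc F_x \subseteq \mc F_y$, and optimality of $x$ (i.e.\ maximality of $\mc F_x$ in $(\mathscr N(\mc F), \subseteq)$) forces $\mc F_x = \mc F_y$. The separating hypothesis then forces $x = y$, giving $I_x \subseteq \{x\}$.

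For the consequence, I would fix $A \in \mc F_x^c$ satisfying $A \cup X = U_{\mc F}$ for all $X \in \mc F_x$, and show $U_{\mc F} \subseteq A \cup \{x\}$ pointwise (the reverse inclusion is automatic). Given any $y \in U_{\mc F} \setminus A$, the equation $A \cup X = U_{\mc F}$ places $y$ into every $X \in \mc F_x$, so $y \in \cap_{F \in \mc F_x} F = I_x$, and by the first part $y = x$.

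The argument is a short sequence of definition unwindings, so there is no real obstacle; the only point requiring care is recognizing that the separating hypothesis is used in exactly one place (to turn $\mc F_x = \mc F_y$ into $x = y$) and that optimality is a maximality condition inside $\mathscr N(\mc F)$ rather than inside $\mc F$ itself. Without either hypothesis $I_x$ could properly contain $\{x\}$, which would break the pointwise step in the second assertion.
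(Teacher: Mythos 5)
Your proposal is correct and follows essentially the same route as the paper: unwind $y \in I_x$ to $\mc F_x \subseteq \mc F_y$, apply optimality and then the separating hypothesis, and deduce the consequence from $U_{\mc F} \setminus A \subseteq I_x = \{x\}$. The only cosmetic difference is that the paper rules out $U_{\mc F} \setminus A = \emptyset$ explicitly, whereas you obtain the equality by noting the reverse inclusion $A \cup \{x\} \subseteq U_{\mc F}$ is automatic; both are fine.
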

\begin{proof}
Note $x \in I_x$ by definition. Let $y \in I_x.$ Then $y \in F$ for all $F \in \mc F_x$, so $\mc F_x \subseteq \mc F_y$ hence $\mc F_x = \mc F_y$ by optimality. Since $\mc F$ is separating, we have $y = x.$ For the second part, if $A \cup X = U_{\mc F}$ for all $X \in \mc F_x,$ then $U_{\mc F} \setminus A \subseteq \cap_{X \in \mc F_x} X = I_x = \{x\}.$ If $U_{\mc F}\setminus A = \emptyset,$ then $U_{\mc F} = A$ since $A \subseteq U_{\mc F},$ a contradiction because $x \notin A.$ So $U_{\mc F} = A \cup \{x\}.$
\end{proof}
\begin{remark}\label{whyOptimalsAreImportant}
If $\mc F$ is finite, separating, and nonempty, then one can forego the notion of optimality as we have defined it and simply focus on studying an $\mc F_x$ of maximal cardinality. An alternative argument, for instance, is to assume $\mc F_x$ has maximal cardinality and suppose $y \in I_x.$ Then $\mc F_x \subseteq \mc F_y,$ but by the separating condition, $\mc F_x \subsetneq \mc F_y,$ so $|\mc F_x| < |\mc F_y|,$ a contradiction. This argument does not quite work in the general case, however. Optimality provides a very simple modification to this argument that generalizes to the infinite case (as seen above). 
\end{remark}

Recall from Definition \ref{posetDefinitions} that if $A, B \in \mc F$ are members, then $A \subset_c B$ (i.e., $B$ ``covers" $A$) if $A \subsetneq B$ and for all $C \in \mc F$ if $A \subseteq C \subseteq B,$ then $A = C$ or $C = B.$
\begin{mydef}
If $\mc F$ is a family of sets and $x \in U_{\mc F}$ and $A \in \mc F,$ then $B$ is an $\textit{$x$-cover}$ of $A$ if $B \in \mc F_x$ and $A \subset_c B.$
\end{mydef}

\begin{lem}\label{coverLemma}
If $\mc F$ is a union-closed family of sets and $x \in U_{\mc F},$ then every member of $\mc F_x$ covers at most one member of $\mc F_x^c.$ Consequently, if for all $A \in \mc F_x^c$ there exists an $x$-cover $B$ of $A,$ then after choosing a fixed $x$-cover $B_A$ of each such $A,$ the map $A \to B_A$ is an injection from $\mc F_x^c$ into $\mc F_x.$
\end{lem}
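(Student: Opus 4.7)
The plan is to prove the contrapositive form of the first assertion: if $B \in \mc F_x$ covers two members $A_1, A_2 \in \mc F_x^c$, then $A_1 = A_2$. I would start by considering $A_1 \cup A_2$, which is a member of $\mc F$ by the union-closed hypothesis. Because $x \notin A_1$ and $x \notin A_2$, we have $x \notin A_1 \cup A_2$, so $A_1 \cup A_2 \in \mc F_x^c$. Since both $A_i \subseteq B$, we also have $A_1 \cup A_2 \subseteq B$, and the inclusion is strict because $x \in B \setminus (A_1 \cup A_2)$.

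Next, I would apply the covering definition. From $A_1 \subset_c B$ and the chain $A_1 \subseteq A_1 \cup A_2 \subsetneq B$ (all in $\mc F$), the covering condition forces $A_1 = A_1 \cup A_2$, i.e., $A_2 \subseteq A_1$. The same reasoning applied with the roles of $A_1, A_2$ swapped yields $A_1 \subseteq A_2$, so $A_1 = A_2$. This gives the first assertion.

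The consequence then follows immediately. Suppose $B_A$ is chosen for each $A \in \mc F_x^c$ as an $x$-cover of $A$. If $B_{A_1} = B_{A_2} = B$ for some $A_1, A_2 \in \mc F_x^c$, then by definition $B$ is an $x$-cover of both $A_1$ and $A_2$, hence $B$ covers both $A_1$ and $A_2$ in $\mc F$; the first part then yields $A_1 = A_2$, so the map $A \mapsto B_A$ is injective from $\mc F_x^c$ into $\mc F_x$.

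There is no real obstacle here beyond a careful use of union-closure: the key move is closing $\{A_1, A_2\}$ under union to produce an element of $\mc F_x^c$ sitting strictly between $A_1$ (and $A_2$) and $B$, which the covering relation then collapses. No cardinality or chain condition hypothesis is needed for this lemma, and the choice of $B_A$ in the consequence is a single application of the axiom of choice over $\mc F_x^c$.
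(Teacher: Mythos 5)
Your proof is correct and uses essentially the same idea as the paper: both arguments hinge on union-closure placing $A_1 \cup A_2$ in $\mc F_x^c$ strictly below $B$ (since $x \in B \setminus (A_1 \cup A_2)$) and then invoking the covering relation to collapse it onto $A_1$ and $A_2$. The only cosmetic difference is that the paper first notes $A_1, A_2$ must be incomparable and derives a contradiction from $A_1 \cup A_2 = B$, whereas you run the covering dichotomy directly and conclude $A_1 = A_2$ by symmetry.
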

\begin{proof}
If $B \in \mc F_x$ covers $A_1 \ne A_2 \in \mc F_x^c,$ then we claim $A_1$ and $A_2$ are incomparable. For otherwise, $A_1 \subsetneq A_2 \subset_c B$ without loss of generality, and that contradicts $B$ being a cover of $A_1.$ So $A_1 \subsetneq A_1 \cup A_2 \subseteq B$ and $A_1 \subset_c B
\implies A_1 \cup A_2 = B,$ a contradiction since $x \notin A_1 \cup A_2.$
\end{proof}
\begin{remark}
The union-closed hypothesis cannot be dropped. Consider $\mc F = \{\{1\}, \{2\}, \{3\}, \{1, 2, 3\}\}.$ Then $\{1,2,3\}$ is a 3-cover of both $\{1\}$ and $\{2\}.$ Moreover, it is not necessarily the case that if $B$ is an $x$-cover of $A,$ then $B = A \cup\{x\}.$ For instance, take $\mc F=\{\{1\}, \{1,2,3\}\}.$ Then $\mc F$ is union-closed and $\{1, 2, 3\}$ is a 2-cover of $\{1\}.$
\end{remark}

Lemma \ref{coverLemma} provides a slightly different argument to the following well-known result:

\begin{cor}\label{containsASingleton}
If $\mc F$ is a union-closed family and $\{x\} \in \mc F$ for some $x \in U_{\mc F},$ then $x$ is abundant in $\mc F.$
\end{cor}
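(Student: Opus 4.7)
The plan is to apply Lemma \ref{coverLemma} directly. For each $A \in \mc F_x^c$, I would produce an explicit $x$-cover $B_A \in \mc F_x$ of $A$, and then Lemma \ref{coverLemma} hands me an injection $\mc F_x^c \hookrightarrow \mc F_x$, which is precisely the definition of abundance.

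The natural candidate is $B_A := A \cup \{x\}$. Since $\{x\} \in \mc F$ by hypothesis and $\mc F$ is union-closed, $B_A \in \mc F$; since $x \in B_A$, we have $B_A \in \mc F_x$. Because $A \in \mc F_x^c$ we have $x \notin A$, so $A \subsetneq B_A$. To verify that $B_A$ covers $A$ in $\mc F$, suppose $C \in \mc F$ satisfies $A \subseteq C \subseteq B_A = A \cup \{x\}$. Then $C$ is either $A$ (if $x \notin C$) or $A \cup \{x\} = B_A$ (if $x \in C$), which is exactly the covering condition. Hence $B_A$ is an $x$-cover of $A$, and choosing $B_A$ uniformly for each $A \in \mc F_x^c$ supplies the hypothesis of Lemma \ref{coverLemma}.

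There is no serious obstacle here; the only point to be careful about is that covering is relative to $\mc F$ (so we only need to rule out intermediate members of $\mc F$, which is automatic because $A \cup \{x\}$ differs from $A$ by a single element $x$). The result then follows immediately from Lemma \ref{coverLemma}: the assignment $A \mapsto A \cup \{x\}$ is an injection from $\mc F_x^c$ into $\mc F_x$, so $x$ is abundant in $\mc F$. This recovers the classical ``singleton'' proof but routed through the cover framework developed in this subsection, which is presumably the author's intent in listing the corollary right after Lemma \ref{coverLemma}.
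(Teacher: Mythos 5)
Your proof is correct and is essentially identical to the paper's: the paper's entire proof of Corollary \ref{containsASingleton} is the one-line observation that $A \subset_c A \cup \{x\}$ for each $A \in \mc F_x^c$, after which Lemma \ref{coverLemma} gives the injection. You have simply spelled out the verification that $A \cup \{x\}$ is indeed an $x$-cover, which the paper leaves implicit.
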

\begin{proof}
If $A \in \mc F_x^c,$ then $A \subset_c A \cup\{x\}.$
\end{proof}

\begin{thm}\label{dim2}
Every union-closed family of dimension two has an abundant element.
\end{thm}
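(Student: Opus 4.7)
The plan is to first reduce to the case where $\mc F$ is separating (using Section \ref{separatingSection}) and to assume $\dim \mc F = 2$ (the cases $\dim \mc F \le 1$ are handled by Proposition \ref{dimAtMostOne}). Since $\mc F$ is finite-dimensional it satisfies both ACC and DCC, so by Lemma \ref{optimalsExist} there is an optimal $x \in U_{\mc F}$, and I aim to show that every such $x$ is abundant by constructing an injection $\mc F_x^c \hookrightarrow \mc F_x$ via $x$-covers and Lemma \ref{coverLemma}.

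The structural setup is this: by Remark \ref{hasAGreatestElt}, $\mc F$ has a greatest element $G = U_{\mc F}$, and a dimension count shows $\mc F = M \sqcup H \sqcup \{G\}$ with $M = \min \mc F$ (height $0$) and $H$ the height-$1$ members. I split $M = M_x \sqcup M_x^c$ and $H = H_x \sqcup H_x^c$ according to membership of $x$. In dimension two, every $H \in H$ satisfies $H \subset_c G$, and for $A \in M$, $D \in H$ with $A \subseteq D$ one has $A \subset_c D$. The first key observation is that $|H_x^c| \le 1$: every element of $H_x^c$ is covered by $G$, and Lemma \ref{coverLemma} forces $G$ to cover at most one member of $\mc F_x^c$.

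If $H_x^c = \emptyset$, then for each $A \in M_x^c$ either some $D \in H_x$ contains (hence covers) $A$ and serves as its $x$-cover, or no such $D$ exists, in which case $A \subset_c G$ and $G$ is the $x$-cover; Lemma \ref{coverLemma} immediately makes this assignment injective. Otherwise $H_x^c = \{H^*\}$. Here I first note that for any $X \in \mc F_x$, $H^* \cup X \in \mc F_x$ properly contains the height-$1$ set $H^*$ and must therefore equal $G$; applying the second part of Lemma \ref{Ix} to $H^*$ gives $G = H^* \cup \{x\}$, so every $A \in M_x^c$ satisfies $A \subseteq G \setminus \{x\} = H^*$.

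The main obstacle is the apparent possibility of an $A \in M_x^c$ that sits inside $H^*$ but inside no $D \in H_x$: such an $A$ would have $H^*$ as its unique $\mc F$-cover and hence admit no $x$-cover. I rule this out using optimality. Because $A$ is minimal and $A \subseteq H^*$ with $H^*$ of height one, $A \subsetneq H^*$, so pick $z \in H^* \setminus A$. Since $z \ne x$, Lemma \ref{Ix} furnishes $X_z \in \mc F_x$ with $z \notin X_z$. Then $A \cup X_z \in \mc F_x$ is strictly larger than $X_z$ (as $x \in X_z \setminus A$ rules out $A \cup X_z \in M_x$), and by the bad-case hypothesis it cannot lie in $H_x$, so $A \cup X_z = G$; but $z \notin A$ and $z \notin X_z$, contradicting $z \in G$. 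Thus every $A \in M_x^c$ is contained in some $D_A \in H_x$, giving an $x$-cover $A \subset_c D_A$; together with the assignment $H^* \mapsto G$, Lemma \ref{coverLemma} shows the resulting map $\mc F_x^c \hookrightarrow \mc F_x$ is injective, so $x$ is abundant.
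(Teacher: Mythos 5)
Your proof is correct and follows essentially the same route as the paper: reduce to the separating case, take an optimal $x$, produce an $x$-cover of each $A \in \mc F_x^c$ by forming $A \cup X$ for a suitable $X \in \mc F_x$ (with Lemma \ref{Ix} ruling out the bad case where every such union is $U_{\mc F}$), and conclude via Lemma \ref{coverLemma}. The paper runs this as a single uniform argument for an arbitrary $A \in \mc F_x^c$, so your stratification by height and the separate treatment of the lone member of $H_x^c$ are sound but not needed.
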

\begin{proof}
We may assume by the work of Section \ref{separatingSection} that $\mc F$ is separating. Since every nontrivial, finite-dimensional, union-closed family has an optimal element, there exists $x \in U_{\mc F}$ that is optimal in $\mc F.$ We claim $x$ is abundant in $\mc F.$ By Lemma \ref{coverLemma}, we need only show that every element of $\mc F_x^c$ has an $x$-cover. To that end, let $A \in \mc F_x^c.$ If $U_{\mc F}$ covers $A$ we are done, so assume $U_{\mc F}$ does not cover $A.$ If $A \cup X = U_{\mc F}$ for all $X \in \mc F_x,$ then by Lemma \ref{Ix} we have $U_{\mc F} = A \cup \{x\}.$ So $A \subset_c U_{\mc F},$ a contradiction. Therefore, there exists $X \in \mc F_x$ such that $A \cup X \ne U_{\mc F}.$ Set $B = A \cup X.$ Then $A \subsetneq B \subsetneq U_{\mc F}.$ Since $\dim \mc F = 2,$ we must have $\height A = 0$ and $\height B = 1$ (see Definition \ref{posetDefinitions}). Hence $A \subset_c B.$ \end{proof}

\begin{example}
The proof of Theorem \ref{dim2} shows that if $\mc F$ is union-closed and separating of dimension two, then every optimal element in $\mc F$ is abundant. To see an example of how the result distinguishes among different possible choices of abundant elements for a given family, consider the following example:

\begin{center}
    \begin{tikzpicture}
        \node (a12) at (0, 2) {$\{1, 2, 3, 4\}$};
        \node (a21) at (-1, 1) {$\{1, 2, 3\}$};
        \node (a23) at (1, 1) {$\{2, 3, 4\}$};
        \node (a31) at (-2,0) {$\{1,2 \}$};
        \node (a32) at (0, 0) {$\{2, 3\}$};
        \node (a33) at (2, 0) {$\{3, 4\}$};

        \draw (a31) -- (a21) -- (a12);
        \draw (a32) -- (a21);
        \draw (a32) -- (a23) -- (a12);
        \draw (a33) -- (a23);
    \end{tikzpicture}
\end{center}

Note that in this example, $\mc F_1 \subsetneq \mc F_2,$ and so $x = 1$ is not optimal, although it is abundant since it resides in exactly half of the members of $\mc F.$ On the other hand, $x = 2$ is optimal and is clearly the ``better" choice, residing in all but one of the sets in $\mc F.$
\end{example}

\begin{example}\label{counterExHigherDim}
Optimal elements need not be abundant in higher dimensions. Consider, for instance, the following union-closed example of dimension three:
\begin{center}
\begin{tikzpicture}
    \node (a12) at (0, 2) {$\{1, 2, 3\}$};
    \node (a21) at (-1, 1) {$\{1, 2\}$};
    \node (a22) at (0, 1) {$\{2, 3\}$};
    \node (a23) at (1, 1) {$\{1, 3\}$};
    \node (a31) at (-1, 0) {$\{2\}$};
    \node (a33) at (1, 0) {$\{3\}$};
    \node (a42) at (0, -1) {$\emptyset$};

\draw (a12)--(a21) -- (a12) -- (a22) -- (a12) -- (a23);
\draw (a21) -- (a31);
\draw (a22) -- (a31) -- (a22) -- (a33);
\draw (a23) -- (a33);
\draw (a31) -- (a42);
\draw (a33) -- (a42);

\end{tikzpicture}
\end{center}

Notice that $x = 1$ is optimal but not abundant. This example is minimal in every immediate sense of the word. If $\mc F$ is a separating union-closed family of sets containing an element $x \in U_{\mc F}$ that is optimal in $\mc F$ yet not abundant in $\mc F$, then we claim $|\mc F_x| \ge 3.$ To see why, first note that $U_{\mc F} \in \mc F_x.$ Since $\mc F \ne \{U_{\mc F}\},$ and $x$ is optimal in $\mc F,$ we must have $\mc F_x \ne \{U_{\mc F}\}.$ So there exists $Q \in \mc F$ such that $x \in Q$ and $Q \ne U_{\mc F}.$ Since $x$ is not abundant, $Q \ne \{x\}$ by Corollary \ref{containsASingleton}. So there is $y \in Q\setminus \{x\}.$ Optimality and the separating condition imply that $\mc F_x \not \subseteq \mc F_y.$ So there is $Q' \in \mc F_x$ such that $y \notin Q'.$ Since $y \in Q,$ we have $Q' \ne Q,$ and of course $Q' \ne U_{\mc F}$ because $y \notin Q'.$ So $|\mc F_x| \ge 3.$ Since $x$ is not abundant, $|\mc F_x^c| \ge 4.$ So $|\mc F| \ge 7.$ Any set with at least 7 subsets must have at least 3 elements. So $|U_{\mc F}| \ge 3.$ And the proof of Theorem \ref{dim2} shows that $\dim \mc F \ge 3.$
\end{example}

\begin{example} \label{keyExample}
The next example demonstrates the limits to the $x$-cover approach that allowed us to prove Theorem \ref{dim2}:

\begin{center}
\begin{tikzpicture}
  \node (max) at (0,4) {$\{1, 2, 3, 4, 5\}$};
  \node (a11) at (-4,3) {$\{1,2,3,5\}$};
  \node (a12) at (-2, 3) {$\{1, 2, 3, 4\}$};
  \node (a15) at (4,3) {$\{2,3,4,5\}$};
  \node (a13) at (2, 3) {$\{1,3,4,5\}$};
  \node (a14) at (0, 3) {$\{1, 2, 4, 5\}$};
  \node (a21) at (-2.0,1) {$\{1, 2, 3\}$};
  \node (a22) at (-4.0, 1) {$\{1, 2, 5\}$};
  \node (a23) at (0, 1) {$\{1, 4, 5\}$};
  \node (a24) at (4, 1) {$\{3,4,5\}$};
  \node (a25) at (2, 1) {$\{2,3,4\}$};
  \node (a31) at (-4, -1) {$\{1, 2\}$};
  \node (a32) at (-2, -1) {$\{1, 5\}$};
  \node (a33) at (2, -1) {$\{3, 4\}$};
  \node (a34) at (0, -1) {$\{2, 3\}$};
  \node (a35) at (4, -1) {$\{4, 5\}$};
  \draw (max) -- (a11) -- (max) -- (a12) -- (max) -- (a13) -- (max) -- (a14) -- (max) -- (a15);
  \draw (a11) -- (a21) -- (a11) -- (a22);
  \draw (a21) -- (a31) -- (a22) -- (a32);
  \draw (a32) -- (a23) -- (a14) -- (a22);
  \draw (a15) -- (a25) -- (a33) -- (a24) -- (a13) -- (a23);
  \draw (a25) -- (a34);
  \draw (a12) -- (a25) -- (a12) -- (a21);
  \draw (a15) -- (a24);
  \draw (a21) -- (a34);
  \draw (a23) -- (a35);
  \draw (a24) -- (a35);
\end{tikzpicture}

\end{center}
In this union-closed example of dimension three, every element in $U_{\mc F} = [5]$ is optimal (and indeed abundant), yet for all $x \in U_{\mc F},$ there exists $A \in \mc F_x^c$ that has no $x$-cover. For example, if $x = 1,$ then $A = \{3, 4\}$ is not covered by any member of $\mc F_1.$ \end{example}

\subsection{Topological spaces} All topological spaces are union-closed by definition, so it is natural to wonder if the the union-closed sets conjecture can be proved for topological spaces. Although it is known to be true for finite topological spaces (\cite{Mehr}, Theorem 6.1), left open is the infinite case. As the next theorem indicates, if $(X, \tau)$ is a (possibly infinite) topological space, all one needs is for $(\tau, \subseteq)$ to satisfy the descending chain condition to guarantee the existence of an abundant element. Recall a topological space $(X, \tau)$ is an \textit{Alexandroff topology} if the arbitrary intersection of open sets is open.

\begin{thm}\label{topologicalAbundance}
Let $(X, \tau)$ be a topological space satisfying the descending chain condition on its open sets and such that $\tau \ne \{\emptyset\}$. Then $X$ has an abundant element of $\tau.$ 
\end{thm}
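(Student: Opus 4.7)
The plan is to combine two consequences of DCC -- existence of an optimal element and existence of a minimum open neighborhood -- and then invoke Corollary \ref{containsASingleton}. Since every topology is closed under arbitrary (hence countable) unions and $(\tau,\subseteq)$ satisfies DCC, Lemma \ref{optimalsExist} produces an optimal element $x \in X = U_\tau$. Using the reduction of Section \ref{separatingSection}, I would then pass to the separating quotient $\mc S$ of $\tau$; the poset isomorphism $(\tau,\subseteq) \cong (\mc S,\subseteq)$ transfers DCC, the identity $[A\cap B] = [A]\cap[B]$ transfers closure under finite intersections, and the last paragraph of Section \ref{separatingSection} transfers optimality of $[x]$ and guarantees that abundance of $[x]$ in $\mc S$ implies abundance of $x$ in $\tau$. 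So I may assume without loss of generality that $\tau$ itself is separating.

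Next, I would produce a minimum element $M_x$ of $\tau_x$. Since $\tau_x$ contains $X$, DCC supplies a minimal element $M_x$. For any $V \in \tau_x$, the intersection $V \cap M_x$ lies in $\tau_x$ (it is open and contains $x$) and is contained in $M_x$, so minimality forces $V \cap M_x = M_x$, i.e.\ $M_x \subseteq V$. Thus $M_x$ is the minimum of $\tau_x$. The heart of the argument is then a one-line application of optimality: any $y \in M_x$ lies in every member of $\tau_x$, so $\tau_x \subseteq \tau_y$, optimality gives $\tau_x = \tau_y$, and the separating hypothesis forces $y = x$. Hence $M_x = \{x\}$; the singleton $\{x\}$ is open, so Corollary \ref{containsASingleton} delivers abundance of $x$ in $\tau$.

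The only real subtlety is the bookkeeping involved in the reduction to the separating case, but since Section \ref{separatingSection} already establishes preservation of the order structure (hence of DCC and of finite intersections) and of optimality/abundance, no genuine difficulty arises. The DCC hypothesis carries the full weight of the proof: it simultaneously produces an optimal element and a minimum open neighborhood, and optimality then pinches that neighborhood down to a singleton, reducing the conclusion to the classical singleton criterion.
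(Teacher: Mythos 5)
Your proposal is correct and follows essentially the same route as the paper: reduce to the separating case, use DCC to get an optimal element via Lemma \ref{optimalsExist}, use DCC again to produce a least open neighborhood of that element (the paper phrases this as the space being Alexandroff), pinch it to a singleton via optimality plus the separating condition (the paper cites Lemma \ref{Ix} for this step, which you reprove inline), and conclude with Corollary \ref{containsASingleton}. The only cosmetic difference is that you construct the minimum neighborhood only at the optimal point rather than establishing the Alexandroff property globally.
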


\begin{proof}
By the work of Section \ref{separatingSection}, it suffices to assume $(X, \tau)$ is $T_0$ (i.e., $\tau$ is separating). We claim $(X, \tau)$ is Alexandroff. By (\cite{Arenas}, p.1), it suffices to show that for all $a \in X,$ there exists a smallest neighborhood $U_a$ of $a.$ Let $\mathcal N(a)$ be the set of all neighborhoods of $a.$ Then $\mathcal N(a)$ is nonempty and hence has a minimal  element $U_a$ since $(\tau, \subseteq)$ satisfies the descending chain condition. If $U \in \mathcal N(a),$ then $U_a \cap U \in \mathcal N(a)$ and $U_a \cap U \subseteq U_a.$ By minimality, $U_a \cap U = U_a,$ so $U_a \subseteq U.$ Since $U$ was arbitrary, we have that $U_a$ is the least element of $\mathcal N(a).$ 
Since $\tau \ne \{\emptyset\},$ we have $U_{\tau} \ne \emptyset,$ so by Lemma \ref{optimalsExist}, there exists an element $x \in X$ that is optimal in $\tau.$ Since $(X, \tau)$ is Alexandroff, we must have $I_x^{\tau} = \cap_{U \in \tau_x} U \in \tau.$ By Lemma \ref{coverLemma}, $I_x = \{x\}.$ So $\{x\} \in \tau,$ and by Corollary \ref{containsASingleton}, $x$ is abundant in $\tau.$ \end{proof}

As mentioned above, Theorem \ref{topologicalAbundance} recovers the result of Mehr (\cite{Mehr}, Theorem 6.1) in the finite case:

\begin{cor}
If $(X, \tau)$ is a finite topological space, and $\tau \ne \{\emptyset\},$ then $X$ has an abundant element of $\tau.$
\end{cor}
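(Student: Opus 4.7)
The plan is simply to verify that a finite topological space satisfies the hypothesis of Theorem \ref{topologicalAbundance} and invoke it. The only chain-theoretic condition needed there is the descending chain condition on the open sets, and for a finite topological space this is automatic: if $\tau$ is finite, then any descending chain $U_1 \supseteq U_2 \supseteq \ldots$ in $(\tau, \subseteq)$ can contain only finitely many distinct sets and hence must stabilize.

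More explicitly, I would first observe that since $X$ is finite, the collection $\tau \subseteq \mathcal{P}(X)$ is itself finite, so $(\tau, \subseteq)$ is a finite poset. Every finite poset satisfies DCC by the pigeonhole principle (a strictly descending infinite chain would yield infinitely many distinct elements). Combined with the hypothesis $\tau \ne \{\emptyset\}$, the conditions of Theorem \ref{topologicalAbundance} are satisfied, so $X$ has an abundant element of $\tau$.

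There is essentially no obstacle here; this is a direct specialization. The only thing worth being careful about is the distinction between $\tau$ being finite versus $X$ being finite, but of course $|\tau| \le 2^{|X|}$, so finiteness of $X$ implies finiteness of $\tau$, which in turn implies DCC.
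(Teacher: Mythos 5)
Your proposal is correct and matches the paper's own proof exactly: the paper likewise observes that finite topological spaces satisfy DCC on their open sets and then invokes Theorem \ref{topologicalAbundance}. Your extra remarks (finiteness of $X$ gives finiteness of $\tau$, hence DCC by pigeonhole) just spell out the same one-line argument in more detail.
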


\begin{proof}
Finite topological spaces satisfy DCC on their open sets, so the result follows by Theorem \ref{topologicalAbundance}.
\end{proof}

\begin{example}\label{nonDCCExamples}
The descending chain condition hypothesis cannot be dropped. For instance, let $X = \mathbb N$ and let $F_i = \{n \in \mathbb N: n \ge i\}.$ Let $\tau = \{F_i: i \in \mathbb N\}\cup\{\emptyset\}.$ Then $\tau$ is infinite, yet for all $m \in X,$ $\tau_m$ is finite. So no $m \in X$ is abundant.\end{example}

\begin{example}Although the DCC hypothesis cannot be dropped, some topologies that fail DCC still have abundant elements. Consider $X = \mathbb R$ and $\tau = \{(-\infty, x): x \in \mathbb R\} \cup \{\mathbb R, \emptyset\}.$ Then $(\tau, \subseteq)$ does not satisfy the descending chain condition, yet we claim every element in $\mathbb R$ is abundant. Indeed, if $a \in \mathbb R,$ then $\tau_a$ is in one-to-one correspondence with the interval $(a, +\infty).$ So $|\tau_a| = |\mathbb R| = |\tau|.$
\end{example}

\section{Dominating Families and $\alpha$-Tents}
The union-closed hypothesis is not always necessary to prove the existence of an abundant element in a family of sets. In this section, we show that if $\mc F$ is \textit{any} family of sets that dominates a union-closed $\alpha$-tent $\mc T$ (i.e., $\alpha$ minimal nodes and a single greatest node), then $\mc F \cup \mc T$ has an abundant element.

\begin{mydef}
If $\mc F$ and $\mc G$ are families of sets, $\mc F$ \textit{dominates} $\mc G$ if for all $A \in \mc F,$ there exists $B \in \mc G$ such that $A \supseteq B.$
\end{mydef}

\begin{mydef}
If $\alpha$ is a positive cardinal, an $\alpha$-tent is the one-dimensional poset with $\alpha$ minimal nodes and a single greatest node.
\end{mydef}

\begin{thm}\label{alphaTent}
Let $\mc T$ be a union-closed $\alpha$-tent for some $\alpha > 1$ and let $\mc F$ be a family of sets. Let $\mc F^* := \mc F \setminus \{\emptyset\}.$ If $\mc F^*$ dominates $\mc T,$ then $\mc F \cup \mc T$ has an abundant element.
\end{thm}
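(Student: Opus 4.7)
The plan is to use the domination hypothesis to split $\mc F \cup \mc T$ into ``buckets'' indexed by the minimal nodes of $\mc T$, and then produce an abundant element by a cardinality comparison between two of the buckets. I would first set up notation: write $\mc T = \{T_\beta : \beta \in I\} \cup \{M\}$, where $|I| = \alpha$, the $T_\beta$ are the minimal nodes, and $M$ is the greatest node of $\mc T$. Because $\mc T$ is one-dimensional and union-closed, whenever $\beta \ne \gamma$ the union $T_\beta \cup T_\gamma$ lies in $\mc T$ and strictly contains both $T_\beta$ and $T_\gamma$ (by minimality), so it must equal $M$. An immediate consequence I would record: if $x \in M \setminus T_{\beta_0}$, then $x \in T_\gamma$ for every $\gamma \ne \beta_0$, since $M \setminus T_{\beta_0} \subseteq T_\gamma$.

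Next, the domination hypothesis lets me partition $\mc F^*$. Each $A \in \mc F^*$ contains some $B \in \mc T$; if $A$ contained two distinct minimal nodes $T_\beta$ and $T_\gamma$, then $A \supseteq T_\beta \cup T_\gamma = M$. So either $A$ contains a unique minimal node $T_\beta$ and fails to contain $M$, or else $A \supseteq M$. Thus
$$\mc F^* = \mc F^3 \sqcup \bigsqcup_{\beta \in I} \mc F^1_\beta,$$
where $\mc F^3 := \{A \in \mc F^* : M \subseteq A\}$ and $\mc F^1_\beta := \{A \in \mc F^* : T_\beta \subseteq A,\ T_\gamma \not\subseteq A \text{ for all } \gamma \ne \beta\}$. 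Setting $\mc H := \mc F \cup \mc T$, $\mc B_\beta := \{T_\beta\} \cup \mc F^1_\beta$, and $\mc B_M := \{M\} \cup \mc F^3$, this upgrades to a partition $\mc H = \mc B_M \sqcup \bigsqcup_{\beta \in I} \mc B_\beta \sqcup (\{\emptyset\} \cap \mc F)$.

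Finally, I would pick $\beta_0 \in I$ minimizing $|\mc B_{\beta_0}|$ (the minimum exists because cardinals are well-ordered), fix $\gamma_0 \in I$ with $\gamma_0 \ne \beta_0$ (exists since $\alpha > 1$), and choose $x \in M \setminus T_{\beta_0}$ (nonempty since $T_{\beta_0} \subsetneq M$). Every set in $\mc B_\gamma$ for $\gamma \ne \beta_0$ contains $T_\gamma \ni x$, and every set in $\mc B_M$ contains $M \ni x$, so the only sets in $\mc H$ that can fail to contain $x$ lie in $\mc B_{\beta_0} \cup (\{\emptyset\} \cap \mc F)$. By the minimality of $\beta_0$, $|\mc B_{\beta_0}| \le |\mc B_{\gamma_0}|$, so there is an injection $\varphi : \mc B_{\beta_0} \hookrightarrow \mc B_{\gamma_0}$; extending by $\emptyset \mapsto M$ (if $\emptyset \in \mc F$) and then restricting to the sets of $\mc H$ not containing $x$ yields the desired injection into $\{A \in \mc H : x \in A\}$, so $x$ is abundant. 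The main obstacle is the structural partition of $\mc F^*$ in the second paragraph --- the key realization being that the union-closure of a one-dimensional tent forces any dominated $A$ to be dominated either by a single minimal node or by $M$. After that, the cardinality argument is essentially automatic; the only subtlety is making sure $\emptyset$ does not clash with the image of $\varphi$, which is handled by routing it into the disjoint bucket $\mc B_M$.
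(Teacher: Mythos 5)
Your proof is correct, and it takes a recognizably different route from the paper's, even though the core insight is shared. Both arguments rest on the same two observations: that every nonempty set in $\mc F \cup \mc T$ lies above some minimal node of $\mc T$, and that for $x \in M \setminus T_{\beta_0}$ the sets avoiding $x$ are exactly those sitting over the single node $T_{\beta_0}$ (plus possibly $\emptyset$), after which one injects that fiber into the fiber over another minimal node and routes $\emptyset$ to the top node $M$. The difference is in how the two fibers are chosen and compared. The paper works with the up-sets $N^{\ua}$ of minimal nodes inside $\mc F^* \cup \mc T$ (which overlap in the sets containing $M$) and selects a node \emph{maximizing} $|N^{\ua}|$; since a set of cardinals need not have a maximum, this forces a two-case analysis, with a separate argument when the supremum is not attained. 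You instead refine the decomposition into genuinely disjoint buckets $\mc B_\beta$ (splitting off the sets containing $M$ into $\mc B_M$) and \emph{minimize} $|\mc B_\beta|$, which always exists by the well-ordering of the cardinals; this eliminates the case split entirely and makes the final counting cleaner, at the modest cost of verifying that the buckets really do partition $\mc F^* \cup \mc T$ (which your dichotomy --- a dominated set either contains a unique minimal node and not $M$, or contains $M$ --- handles correctly, using $\alpha > 1$ to rule out empty minimal nodes). The handling of $\emptyset$ via $\emptyset \mapsto M$ is the same device the paper uses with $U_{\mc T}$.
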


\begin{proof}
Let $\mc G: = \mc F^* \cup \mc T$ and consider $(\mc G, \subseteq).$ First, we claim $\min \mc G = \min \mc T$ (the latter of which is the set of $\alpha$ minimal nodes of $\mc T$). Suppose $A \in \min \mc T,$ and suppose $X \in \mc G$ is such that $X \subseteq A.$ If $X \in \mc T,$ then $X = A$ and hence $A \in \min \mc G.$ If $X \in \mc F^*,$ then by domination there exists $X' \in \mc T$ such that $A \supseteq X \supseteq X'.$ Since $A \in \min \mc T,$ we have $A = X'.$ So $A = X$ still. Hence $A \in \min \mc G.$ Likewise, if $A \in \min \mc G,$ then since $\dim \mc T = 1,$ there exists $X \in \min \mc T$ such that $A \supseteq X.$ So $A = X$ and hence $A \in \min \mc T.$ Therefore, $\min \mc T = \min \mc G.$ Consider $\mc M := \{|M^{\ua}|: M \in \min \mc T\},$ where each $M^{\ua}$ is taken in $(\mc G, \subseteq).$ 

Suppose $\max \mc M$ exists and equals $|M^{\ua}|$ for some $M \in \min \mc T.$ Since $\alpha > 1,$ no set in $\min \mc T$ is empty. Let $x \in M.$ If $x$ is in each minimal node of $\mc T,$ then $x$ is in every nonempty member of $\mc F \cup \mc T$ and we are done. Otherwise, by Lemma \ref{allButOne}, there exists exactly one minimal node $N$ such that $x \notin N.$ In particular, if $G \in \mc G_x^c,$ then we must have $G \supseteq N.$ Moreover, since $x \in U_{\mc T}$ (i.e., the greatest node of $\mc T$), it follows that $\mc G_x^c \subseteq N^{\ua} \setminus \{U_{\mc T}\}.$ Now $|N^{\ua}\setminus \{U_{\mc T}\}| \le |M^{\ua}\setminus \{U_{\mc T}\}|$ because of our choice of $M^{\ua}$ and the fact that $U_{\mc T} \in M^{\ua} \cap N^{\ua}.$ Let $\vp_1: N^{\ua}\setminus\{U_{\mc T}\} \hookrightarrow M^{\ua} \setminus \{U_{\mc T}\}$ be an injective set map. Then $\vp_1$ restricts to an injective map $\mc G_x^c \hookrightarrow \mc G_x.$ If $\mc F^* = \mc F,$ we are done. Otherwise, $\emptyset \in \mc F$ and we may extend $\vp_1$ to $\vp_2: \mc G_x^c \cup \{\emptyset\} \to \mc G_x$ by setting $\vp_2(\emptyset) = U_{\mc T}.$ Then $\vp_2$ is injective because $\vp_1$ is and $U_{\mc T} \notin \text{im}\vp_1.$ 

Now suppose $\max \mc M$ does not exist. We claim every element of $U_{\mc T}$ is abundant in $\mc F \cup \mc T.$ Let $x \in U_{\mc T}$ and assume without loss of generality that there is $N \in \min \mc T$ such that $x \notin N.$ Again by Lemma \ref{allButOne}, such $N$ is unique. Since $\max \mc M$ does not exist, there exists $M \in \min \mc T$ such that $|M^{\ua}| > |N^{\ua}|.$ Since $M \ne N$ we have $x \in M.$ Now apply the argument from the previous paragraph to $M$ and $N.$  \end{proof}

\begin{cor}
If $(\mc G, \subseteq)$ is a DCC union-closed family of nonempty sets such that there exists a height-one member of $\mc G$ that exceeds every height-zero member of $\mc G,$ then the family $\mc G\cup\{\emptyset\}$ has an abundant element.
\end{cor}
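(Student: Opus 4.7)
The plan is to reduce to Theorem \ref{alphaTent} by extracting a suitable union-closed $\alpha$-tent out of $\mc G$ itself. Set $\mc F := \mc G \cup \{\emptyset\}$, so that $\mc F^* = \mc G$ because every set in $\mc G$ is nonempty by hypothesis. Let $T$ be the distinguished height-one member of $\mc G$ that strictly contains every minimal element of $\mc G$.

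First I would dispose of the degenerate case in which $\mc G$ has a unique minimal element $M_0$. In that case, DCC forces $M_0 \subseteq A$ for every $A \in \mc G$, and since $M_0$ is nonempty, any $x \in M_0$ lies in every set of $\mc G$. Then $\mc F_x = \mc G$ and $\mc F_x^c = \{\emptyset\}$, so $x$ is trivially abundant in $\mc F$ via any map sending $\emptyset$ to a chosen member of $\mc G$.

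In the remaining case $\alpha := |\min \mc G| \ge 2$, I would put $\mc T := \{T\} \cup \min \mc G$. The hypothesis on $T$ makes $(\mc T, \subseteq)$ a one-dimensional poset whose $\alpha$ minimal nodes are the minimal elements of $\mc G$ and whose unique greatest node is $T$; that is, $\mc T$ is an $\alpha$-tent. To verify union-closure, take distinct $M_i, M_j \in \min \mc G$: their union belongs to $\mc G$ by union-closedness of $\mc G$ and satisfies $M_i \cup M_j \subseteq T$ because each $M_\ell \subseteq T$. Hence $M_i \cup M_j$ lies in the downset of $T$ taken in $\mc G$. Since $T$ has height one in $\mc G$, that downset consists only of $T$ and minimal elements of $\mc G$; as $M_i \cup M_j \supsetneq M_i$ rules out minimality, we must have $M_i \cup M_j = T \in \mc T$.

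Domination is then immediate: by DCC every $A \in \mc G = \mc F^*$ contains some minimal element of $\mc G$, and every such minimal element is a member of $\mc T$. Theorem \ref{alphaTent} therefore applies and yields an abundant element of $\mc F \cup \mc T = \mc G \cup \{\emptyset\}$, as desired. The main obstacle is the union-closedness verification for $\mc T$, and that is precisely where the height-one assumption on $T$ is being used: without it, there is no reason for pairwise unions of distinct minimal elements of $\mc G$ to collapse onto $T$.
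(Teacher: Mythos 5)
Your proposal is correct and follows essentially the same route as the paper: both split on whether $|\min \mc G| = 1$, build the tent $\mc T = \{T\} \cup \min \mc G$, use the height-one hypothesis to show unions of distinct minimal elements collapse to $T$, and invoke DCC for domination before applying Theorem \ref{alphaTent}. No substantive differences.
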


\begin{proof}
Let $\alpha = |\min \mc G|.$ Note that $\alpha > 0.$ If $\alpha = 1,$ then $\mc G$ has a unique minimal nonempty member $M,$ and if $x \in M,$ then $x$ is in every member of $\mc G.$ Thus, $(\mc G\cup\{\emptyset\})_x^c = \{\emptyset\},$ and it follows that $x$ is abundant in $\mc G \cup\{\emptyset\}.$

Suppose $\alpha > 1.$ Let $H$ be a height-one element as in the hypothesis, and let $M \ne N$ be any two height-zero elements of $\mc G.$ Then $M \subsetneq M \cup N \in \mc G,$ and since $M\cup N \subseteq H,$ we have $M \cup N = H$ because $\height H = 1.$ Therefore, the set $\mc T:=\{H\}\cup \min \mc G$ is a union-closed $\alpha$-tent.

Let $\mc F = \mc G\cup\{\emptyset\}.$ Then $\mc F^* = \mc G$ since every member of $\mc G$ is nonempty, and $\mc F^*$ dominates $\mc T$ because every member of $\mc G$ contains an element of $\min \mc G$ by the DCC hypothesis. By Theorem \ref{alphaTent}, $\mc F \cup \mc T = (\mc G\cup\{\emptyset\}) \cup \mc T = \mc G \cup \{\emptyset\}$ has an abundant element.
\end{proof}

\begin{example}
A key feature of Theorem \ref{alphaTent} is that it does not require that $\mc F$ be union-closed. As an example, let $\mc M$ be any collection of proper subsets of $\mathbb R$ such that the union of any two is exactly $\mathbb R$ (for example, $\mc M$ could consist of all sets of the form $\mathbb R \setminus \{t\}$ for $t \in \mathbb R$). Note that every set in $\mc M$ is nonempty. For each $M \in \mc M,$ let $\mc F_M$ be any collection of subsets of $\mathbb C$ such that for all $A \in \mc F_M,$ we have $M \subseteq A.$ Let $\mc F = (\cup_{M \in \mc M} \mc F_M)\cup\{\emptyset\}.$ Then $\mc T := \{\mathbb R\} \cup \mc M$ is a union-closed $\alpha$-tent, where $\alpha = |\mc M|,$ and $\mc F^*$ dominates $\mc T$ by definition of $\mc F.$ By Theorem \ref{alphaTent}, there exists a real number that is in at least half of the sets in $\mc F \cup \mc T$ (we know a real number can be chosen since the proof of $\ref{alphaTent}$ picks an abundant element from $U_{\mc T} = \mathbb R$).
\end{example}

\begin{example} The proof of Theorem \ref{alphaTent} occasionally pairs $\emptyset$ with $U_{\mc T}$ in order to get the desired injective map. Sometimes this is necessary under the current strategy. Consider, for instance, the following family of sets:

\begin{center}
    
\begin{tikzpicture}
  \node (a11) at (-1, 1) {$\{1, 3\}$};
  \node (a12) at (0, 1) {$\{1, 2\}$};
  \node (a13) at (1, 1) {$\{2, 4\}$};
  \node (a21) at (-1, 0) {$\{1\}$};
  \node (a23) at (1, 0) {$\{2\}$};
  \node (a32) at (0, -1) {$\emptyset$};
\draw (a11) -- (a21) -- (a32);
\draw (a12) -- (a21);
\draw (a13) -- (a23);
\draw (a12) -- (a23);
\draw (a21) -- (a32);
\draw (a23) -- (a32);
\end{tikzpicture}

\end{center}

 Notice that if $\mc F = \{\emptyset, \{1, 3\}, \{2, 4\}\},$ then $\mc F^*$ dominates a 2-tent $\mc T = \{\{1\}, \{2\}, \{1, 2\}\}.$ An inspection of the figure shows that $x = 1$ is abundant, and the argument in the proof of \ref{alphaTent} allows one to create a map by sending $\{2\} \to \{1\},$ and $\{2, 4\} \to \{1, 3\},$ while $\emptyset$ starts off unassigned since it is not in $\mc F^* \cup \mc T.$ However, $U_{\mc T}$ was also left open for assignment in the argument, and so we may send $\emptyset \to U_{\mc T}$ to get the full injective map. 

\end{example}

\section{Acknowledgments}
The author wishes to thank Washington \& Lee University for its support through the Lenfest Summer Research Grant. The author also wishes to thank the referee for their helpful remarks.

\section{Declarations}
\subsection{Ethical Approval} Not applicable. This study does not involve human or animal subjects.
\subsection{Funding} This study was partially supported by the Lenfest Summer Research Grant at Washington and Lee University. The Lenfest Grant is an internally awarded grant.
\subsection{Availability of Data and Materials} Not applicable.

\bibliography{Manuscript}
\bibliographystyle{sn-mathphys-ay.bst}

\end{document}